\newtheorem{theorem}{Theorem}[section]
\newtheorem{lemma}[theorem]{Lemma}
\newtheorem{proposition}[theorem]{Proposition}
\newtheorem{corollary}[theorem]{Corollary}
\newtheorem{conjecture}[theorem]{Conjecture}
\theoremstyle{definition}
\newtheorem{definition}[theorem]{Definition}
\newtheorem{example}{Example}
\theoremstyle{remark}
\newtheorem{remark}[theorem]{Remark}
\numberwithin{equation}{section}
\newcommand{\inv}{\ensuremath\mathrm{inv}}
\newcommand{\row}{\ensuremath\mathrm{row}}
\newcommand{\perm}{\ensuremath\mathrm{perm}}
\newcommand{\Red}{\ensuremath{R}}
\newcommand{\D}{\ensuremath\mathbb{D}}
\newcommand{\Bal}{\ensuremath\mathrm{SBT}}
\newcommand{\swap}{\ensuremath\mathfrak{c}}
\newcommand{\braid}{\ensuremath\mathfrak{b}}
\newcommand{\newword}[1]{\textbf{\emph{#1}}}
\newlength\cellsize \setlength\cellsize{12\unitlength}
\newcommand\cellify[1]{\def\thearg{#1}\def\nothing{}%
\ifx\thearg\nothing\vrule width0pt height\cellsize depth0pt%
  \else\hbox to 0pt{\usebox2\hss}\fi%
  \vbox to 12\unitlength{\vss\hbox to 12\unitlength{\hss$#1$\hss}\vss}}
\newcommand\tableau[1]{\vtop{\let\\=\cr
\setlength\baselineskip{-12000pt}
\setlength\lineskiplimit{12000pt}
\setlength\lineskip{0pt}
\halign{&\cellify{##}\cr#1\crcr}}}
\newcommand{\gb}{\textcolor[RGB]{220,220,220}{\rule{1\cellsize}{1\cellsize}}\hspace{-\cellsize}\usebox2}
\newcommand\nocellify[1]{\def\thearg{#1}\def\nothing{}%
\ifx\thearg\nothing\vrule width0pt height\cellsize depth0pt%
  \else\hbox to 0pt{\hss}\fi%
  \vbox to 12\unitlength{\vss\hbox to 12\unitlength{\hss$#1$\hss}\vss}}
\newcommand\notableau[1]{\vtop{\let\\=\cr
\setlength\baselineskip{-12000pt}
\setlength\lineskiplimit{12000pt}
\setlength\lineskip{0pt}
\halign{&\nocellify{##}\cr#1\crcr}}}
\newlength\smcellsize \setlength\smcellsize{9\unitlength}
\newcommand\smcellify[1]{\def\thearg{#1}\def\nothing{}%
\ifx\thearg\nothing\vrule width0pt height\smcellsize depth0pt%
  \else\hbox to 0pt{\usebox3\hss}\fi%
  \vbox to \smcellsize{\vss\hbox to \smcellsize{\hss$_{#1}$\hss}\vss}}
\newcommand\smtab[1]{\vtop{\let\\=\cr
\setlength\baselineskip{-9000pt}
\setlength\lineskiplimit{9000pt}
\setlength\lineskip{0pt}
\halign{&\smcellify{##}\cr#1\crcr}}}
\newcommand{\cir}[1]{\def\thearg{#1}\def\nothing{}%
\ifx\thearg\nothing\vrule width0pt height10\unitlength depth0pt%
  \else\hbox to 0pt{\usebox4\hss}\fi%
  \vbox to 10\unitlength{\vss\hbox to 10\unitlength{\hss$#1$\hss}\vss}}
\begin{document}

%%%%%%%%%%%%%%%%%%%%%%%%%%%%%%%%%%%%%%%%%%%%%%%%%%%%%%%%%%%%
%  TITLE PAGE information
%%%%%%%%%%%%%%%%%%%%%%%%%%%%%%%%%%%%%%%%%%%%%%%%%%%%%%%%%%%%

%     [Short Title]{Full Title}
\title{An inversion metric for reduced words}  

%    Information for first author
\author{Sami Assaf}
\address{Department of Mathematics, University of Southern California, 3620 South Vermont Avenue, Los Angeles, CA 90089-2532, U.S.A.}
\email{shassaf@usc.edu}
\thanks{Work supported in part by the Simons Foundation (Award 524477, S.A.).}

%    General info
\subjclass[2010]{%
  Primary 05A05, 05E18; %
  Secondary 05A15, 05A19}

%% % 05 Combinatorics
%% %% A Enumerative combinatorics
%% 05A05 % Permutations, words, matrices
%% 05A15 % Exact enumeration problems, generating functions
%% 05A19 % Combinatorial identities, bijective combinatorics
%% %% E Algebraic combinatorics
%% 05E18 % Group actions on combinatorial structures

%\date{\today}

%\dedicatory{}

\keywords{reduced words, balanced tableaux, inversion, Yang--Baxter moves}

\begin{abstract}
  We study the graph on reduced words with edges given by the Coxeter relations for the symmetric group. We define a metric on reduced words for a given permutation, analogous to Coxeter length for permutations, for which the graph becomes ranked with unique maximal element. We show this metric extends naturally to balanced tableaux, and use it to recover enumerative results of Edelman and Greene and of Reiner and Roichman.
\end{abstract}

\maketitle

%%%%%%%%%%%%%%%%%%%%%%%%%%%%%%%%%%%%%%%%%%%%%%%%%%%%%%%%%%%%%%%%
%
\section{Introduction}
%
%%%%%%%%%%%%%%%%%%%%%%%%%%%%%%%%%%%%%%%%%%%%%%%%%%%%%%%%%%%%%%%%
\label{sec:introduction}

The symmetric group $\mathfrak{S}_n$ has a Coxeter presentation with generators $s_i$, the simple transpositions interchanging $i$ and $i+1$, and Coxeter relations
\begin{enumerate}
\item $s_i s_j = s_j s_i$ for $|i-j| \geq 2$,
\item $s_i s_{i+1} s_i = s_{i+1} s_i s_{i+1}$ for $1\leq i \leq n-2$,
\end{enumerate}
and $s_i^2$ is the identity. We call (1) a \newword{commutation} and (2) a \newword{Yang--Baxter} move.

Given any permutation $w \in \mathfrak{S}_n$, a \newword{reduced word} for $w$ is a sequence $\rho = (\rho_{\ell(w)}, \ldots, \rho_1)$ such that $w = s_{\rho_{\ell(w)}} \cdots s_{\rho_1}$, where $\ell(w)$ is the \newword{length} of $w$ given by the number of pairs $(i<j)$ such that $w_i > w_j$. 

Tits \cite{Tit69} studied the graph with vertex set given by reduced words and edges connecting two reduced words that differ by a single Coxeter relation. In particular, he showed that the subgraph on reduced words for a given permutation is connected. There has been much research on this graph, in particular for reduced words for the longest permutation $w_0^{(n)}$ of $\mathfrak{S}_n$. In this paper, we add additional structure to this graph, making it into a ranked poset with canonical maximal element. From this we derive an explicit inversion metric on reduced words for the same permutation that precisely gives the minimum number of Coxeter relations needed to transform one into another, along with how many are commutations and how many Yang--Baxter moves. Dehornoy and Autord \cite{DA10} considered a similar question, phrased as computing the diameter of the graph on reduced words for $w_0^{(n)}$. They used techniques in group theory give a series of bounds and asymptotics, results which can be made explicit with this new metric.

Edelman and Greene \cite{EG87} introduced balanced tableaux to prove bijectively a result of Stanley \cite{Sta84} equating reduced words for $w_0^{(n)}$ with standard Young tableaux of staircase shape. The poset structure and inversion statistic extend naturally to balanced tableaux, where the constructions simplify greatly. We use this simplified metric on balanced tableaux to give a new, elementary proof of a result of Reiner and Roichman \cite{RR13} computing the diameter of the graph on reduced words for $w_0^{(n)}$. 

% \begin{center}
% {\sc Acknowledgments}
% \end{center}
% 
% These people were super helpful.

%%%%%%%%%%%%%%%%%%%%%%%%%%%%%%%%%%%%%%%%%%%%%%%%%%%%%%%%%%%%%%%%
%
\section{Reduced words}
%
%%%%%%%%%%%%%%%%%%%%%%%%%%%%%%%%%%%%%%%%%%%%%%%%%%%%%%%%%%%%%%%%
\label{sec:schubert}

Let $\Red(w)$ denote the set of reduced words for $w$, indexed \emph{from right to left} to mirror the action of $s_i$ as a function on permutations. 

\begin{example}[Reduced words]
  Take $w$ to be the permutation $42153$. Then the word $(\rho_5,\rho_4,\rho_3,\rho_2,\rho_1) = (1,4,2,3,1)$ is a reduced word for $w$ since
  \begin{displaymath}
    \begin{array}{rcr}
      s_1 s_4 s_2 s_3 s_1 & = & s_1 s_4 s_2 s_3 s_1 \cdot 12345 \\
      & = & s_1 s_4 s_2 s_3 \cdot 21345 \\
      & = & s_1 s_4 s_2 \cdot 21435 \\
      & = & s_1 s_4 \cdot 24135 \\
      & = & s_1 \cdot 24153 \\
      & = & 42153 
    \end{array}
  \end{displaymath}
  The $11$ reduced words in $\Red(42153)$ are shown in Fig.~\ref{fig:reduced}.
  \label{ex:reduced}
\end{example}

\begin{figure}[ht]
  \begin{displaymath}
    \begin{array}{c}
      (4,2,1,2,3) \hspace{1.5ex} (4,1,2,1,3) \hspace{1.5ex} (4,1,2,3,1) \hspace{1.5ex} (2,4,1,2,3) \hspace{1.5ex} (2,1,4,2,3) \hspace{1.5ex} (2,1,2,4,3) \\[1ex]
      (1,4,2,3,1) \hspace{1.5ex} (1,2,4,3,1) \hspace{1.5ex} (1,4,2,1,3) \hspace{1.5ex} (1,2,4,1,3) \hspace{1.5ex} (1,2,1,4,3)
    \end{array}
  \end{displaymath}
  \caption{\label{fig:reduced}The reduced words for $42153$.}
\end{figure}

\begin{remark}
  A pair of indices $(i<j)$ such that $w_i > w_j$ is called an \emph{inversion} of $w$, and the number of such pairs the \emph{inversion number} of $w$. We avoid this terminology here, instead referring to the latter as the \emph{length} of the permutation, in order to avoid confusion with the upcoming definition of \emph{inversions} for reduced words.
\end{remark}

\begin{definition}
  The \newword{run decomposition} of $\rho$, denoted by $(\rho^{(k)} | \cdots | \rho^{(1)})$, partitions $\rho$ into decreasing sequences (read from right to left) of maximal length.
\end{definition}

\begin{example}[Run decomposition]
  The word $\rho = (5,6,3,4,5,7,3,1,4,2,3,6)$, a reduced word for the permutation $w = 41758236$, has run decomposition
  \[ ( \overbrace{5 , 6}^{\rho^{(5)}} \mid \overbrace{3 , 4 , 5 , 7}^{\rho^{(4)}} \mid \overbrace{3}^{\rho^{(3)}} \mid \overbrace{1 , 4}^{\rho^{(2)}} \mid \overbrace{2 , 3 , 6}^{\rho^{(1)}} ) \] \vspace{-\baselineskip}
  \label{ex:run}
\end{example}

The following definition for \emph{super-Yamanouchi} words first appears in \cite{Ass-T}, where it is shown that the reduced word contributing the unique leading term to a Schubert polynomial is precisely this super-Yamanouchi word. The terminology derives from \emph{Yamanouchi} words, which capture the unique leading terms for Schur polynomials.

\begin{definition}
  A reduced word $\rho$ with run decomposition $(\rho^{(k)} | \cdots | \rho^{(1)})$ is \newword{super-Yamanouchi} if each $\rho^{(i)}$ is an interval and $\min(\rho^{(k)}) > \cdots > \min(\rho^{(1)})$.
  \label{def:re-super}
\end{definition}

\begin{example}[Super-Yamanouchi]
  The word $\rho = (5,6,3,4,5,7,3,1,4,2,3,6)$ from Example~\ref{ex:run} is \emph{not} super-Yamanouchi since none of $\rho^{(4)}, \rho^{(2)}, \rho^{(1)}$ is an interval, and since neither $\min(\rho^{(4)}) > \min(\rho^{(3)})$ nor $\min(\rho^{(2)}) > \min(\rho^{(1)})$ holds.

  In contrast, the word $\rho = (5,6,7,4,5,3,4,5,6,1,2,3)$, another reduced word for the same permutation, is super-Yamanouchi, with run decomposition
  \[ ( \overbrace{5 , 6, 7}^{\rho^{(4)}} \mid \overbrace{4 , 5 }^{\rho^{(3)}} \mid \overbrace{3, 4, 5, 6}^{\rho^{(2)}} \mid \overbrace{1 , 2, 3}^{\rho^{(1)}}  ) , \]
  so each run is an interval and $\min(\rho^{(4)}) > \min(\rho^{(3)}) > \min(\rho^{(2)}) > \min(\rho^{(1)})$.
  \label{ex:super-Y}
\end{example}

\begin{proposition}
  For any $w$, there exists a unique super-Yamanouchi $\pi\in\Red(w)$. 
  \label{prop:super-Y}
\end{proposition}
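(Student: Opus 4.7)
My plan is induction on $\ell(w)$, peeling off the rightmost run. The base case $w = e$ is immediate: the empty word is vacuously super-Yamanouchi and is the unique reduced word for the identity. For $w \neq e$, I would set $c = \min\{i : w_i \neq i\}$; since $w$ fixes $\{1,\ldots,c-1\}$ and permutes $\{c,c+1,\ldots,n\}$, we have $w_c > c$, so $m := w_c - c$ is a positive integer. The central claim to establish is that every super-Yamanouchi $\rho \in \Red(w)$ has rightmost run $\rho^{(1)} = (c, c+1, \ldots, c+m-1)$; both existence and uniqueness then follow by induction on the peeled permutation.

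To identify $\min(\rho^{(1)})$, observe that the super-Yamanouchi condition makes the sequence of run-minima strictly decreasing from left to right, so $\min(\rho^{(1)})$ equals the smallest index $j$ for which $s_j$ appears anywhere in $\rho$. Since $w$ fixes $\{1,\ldots,c-1\}$, it lies in the parabolic $\langle s_c, s_{c+1}, \ldots, s_{n-1}\rangle$, so no reduced word for $w$ can use $s_j$ with $j<c$; and since $w_c \neq c$, the element $w$ lies outside the smaller parabolic $\langle s_{c+1}, \ldots, s_{n-1}\rangle$, so $s_c$ must appear. Hence $\min(\rho^{(1)}) = c$, and $\rho^{(1)} = (c, c+1, \ldots, c+\ell-1)$ for some $\ell \geq 1$. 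To pin down $\ell$, I would evaluate $w$ from the identity one-line by successive position swaps $s_{\rho_1}, s_{\rho_2}, \ldots$: the first run puts the value $c+\ell$ at position $c$, and every subsequent swap has index strictly greater than $c$ and so does not touch position $c$. This yields $w_c = c+\ell$, forcing $\ell = m$.

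With $\rho^{(1)}$ pinned down, write $\rho = (\rho' \mid \rho^{(1)})$. The prefix $\rho'$ is itself super-Yamanouchi and is a reduced word for the permutation $w'$ obtained from $w$ by undoing the cycle corresponding to the first run. A direct computation gives $w'_i = i$ for $i \leq c$ and $\ell(w') = \ell(w) - m < \ell(w)$, so the induction hypothesis yields uniqueness of $\rho'$ and hence of $\rho$. For existence I would run the same construction in reverse: starting from $w$, build $w'$ by cycle peeling, take its unique super-Yamanouchi word $\rho'$ by induction (all of whose runs have minima $>c$ because $w'$ fixes $\{1,\ldots,c\}$), and append $(c, c+1, \ldots, c+m-1)$ on the right; each super-Yamanouchi condition is preserved by the appended run.

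The delicate step is the length identification $|\rho^{(1)}| = m$: this uses the super-Yamanouchi hypothesis crucially, to rule out later runs that could otherwise disturb position $c$. Once that is in place, the rest is routine induction and bookkeeping of the group-theoretic conventions.
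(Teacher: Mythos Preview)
Your argument is correct and takes a genuinely different route from the paper's. The paper proves existence constructively via Algorithm~\ref{alg:super-Y}, which builds $\pi$ from the \emph{left}: at each stage it locates the \emph{last} descent of the current permutation $v$ and appends the interval that sorts that letter to its final position, thereby producing the runs $\pi^{(k)}, \pi^{(k-1)}, \ldots$ in order; uniqueness is then argued separately by stripping a common left prefix from $\pi$ and a hypothetical competitor $\rho$ and comparing their leftmost letters against the last descent of $w$. You instead peel from the \emph{right}, identifying the rightmost run $\rho^{(1)}$ via the first non-fixed point $c = \min\{i : w_i \neq i\}$ and a parabolic-subgroup argument, and you handle existence and uniqueness together by a single clean induction on $\ell(w)$. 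Your route is arguably more transparent for uniqueness---the observation that $\min(\rho^{(1)})$ is forced to equal $c$ because $w$ lies in $\langle s_c,\ldots,s_{n-1}\rangle$ but not in $\langle s_{c+1},\ldots,s_{n-1}\rangle$ is crisper than the paper's descent-tracking---and it avoids introducing a standalone algorithm. The paper's left-peeling approach, on the other hand, is tailored to what follows: Algorithm~\ref{alg:super-Y} is reused verbatim in the proof of Proposition~\ref{prop:super-bal} to identify the super-Yamanouchi word with the row reading word of the row-interval filling of $\D(w)$, where peeling by last descent corresponds precisely to reading rows from top to bottom.
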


\begin{proof}
  Given $w$, construct $\pi$ according to Algorithm~\ref{alg:super-Y}. To see this is well-defined, the set in line 5 is nonempty whenever $\ell(v)>0$, the set in line 6 is nonempty by construction, and line 8 removes precisely $(j-2)-i+1 = j-i-1 \geq 1$ inversions from $v$, ensuring that algorithm terminates. Line 8 also ensures that the resulting word $\pi$ will be a word for $w$ and will be reduced since $(j-2)-i+1$ inversions are removed when appending $(j-2)-i+1$ letters to $\pi$. Each pass through line 7 appends an interval to $\pi$, so to check the super-Yamanouchi condition, we need only check that a subsequent pass chooses a smaller index at line 5. If $i$ is chosen in line 5, then after line 8 $v$ has no inversions weakly beyond index $i$, ensuring that the maximum in line 5 of the next iteration is strictly less than $i$. Therefore Algorithm~\ref{alg:super-Y} is well-defined and returns a super-Yamanouchi reduced word for $w$.

  Now suppose that $\rho \neq \pi$ is another super-Yamanouchi reduced word for $w$. Let $i$ be the maximum index for which $\pi_i \neq \rho_i$. Clearly removing the prefix or suffix of a reduced word does not change that it is reduced. Moreover, this also preserves the super-Yamanouchi property since runs must still form intervals and only the leftmost run can have a changed minimum, which necessarily gets weakly larger. Furthermore, removing the same prefix or suffix for two reduced words for the same permutation results again in (shorter) reduced words for the new permutation. Therefore by removing the suffix $\pi_{\ell},\pi_{\ell-1}\cdots\pi_{i+1}$ from both $\pi$ and $\rho$, we may assume $i = \ell$.

  The interval condition for super-Yamanouchi words ensures that a letter in position $i$ of $w$ is moved by success $s_k$'s to some position $j>i$, and the decreasing minimum condition for super-Yamanouchi words ensures that the subsequent letter moved is strictly left of position $i$. In order to be a reduced word, we must have $w_{\rho_{\ell}} > w_{\rho_{\ell}+1}$. Since $\pi$ is constructed by choosing the maximum $i$ such that $w_i > w_{i+1}$, we must have $\pi_{\ell} > \rho_{\ell}$. Since $\rho$ first selects an index $\rho_{\ell} < \pi_{\ell}$, and since each run of $\rho$ either fixes the position of the final descent or moves it one position to the left, based on whether or not that run crosses over the descent, there is no way to begin a new run with the final descent without violating the super-Yamanouchi condition. Thus $\pi$ is the unique super-Yamanouchi word for $w$.
\end{proof}

\begin{algorithm}[ht]
  \begin{algorithmic}[1]
    \Procedure{super}{$w$}
    \State $v \gets w$
    \State $\pi \gets ()$ 
    \While {$\ell(v)>0$}
      \State $i \gets \max\{ k \mid w_k > w_{k+1} \}$
      \State $j \gets \min\left\{ \{ k \mid w_i < w_k \} \cup \{n+1\} \right\}$
      \State $\pi \gets (\pi, i, i+1, \ldots, j-2)$
      \State $v \gets s_{j-2} \cdots s_{i+1} s_i v$
    \EndWhile
    \State \textbf{return} $\pi$
    \EndProcedure
  \end{algorithmic}
  \caption{\label{alg:super-Y}Super-Yamanouchi reduced word}
\end{algorithm}

\begin{example}[Super-Yamanouchi reduced word]
  Construct the super-Yamanouchi reduced word for the permutation $w = 41758236$ by Algorithm~\ref{alg:super-Y} as illustrated in Fig.~\ref{fig:super-Y}. We initialize with $v = 41758236$ and $\pi=()$, and then
  \begin{itemize}
  \item[loop 1:] $i=5$ and $j=8+1=9$, resulting in $\pi = (\mathbf{5,6,7})$ and now $v = 4175236\mathbf{8}$;
  \item[loop 2:] $i=4$ and $j=7$, and so $\pi = (5,6,7,\mathbf{4,5})$ and  $v = 41723\mathbf{5}68$;
  \item[loop 3:] $i=3$ and $j=8$, and so $\pi = (5,6,7,4,5,\mathbf{3,4,5,6})$ and $v = 412356\mathbf{7}8$;
  \item[loop 4:] $i=1$ and $j=5$, and so $\pi = (5,6,7,4,5,3,4,5,6,\mathbf{1,2,3})$ and $v = 123\mathbf{4}5678$. 
  \end{itemize}
  Having reached the identity, we terminate. Therefore the unique super-Yamanouchi reduced word for $w = 41758236$ is $\pi = (5,6,7,4,5,3,4,5,6,1,2,3)$.
  \label{ex:construct-super-Y}
\end{example}

\begin{figure}[ht]
  \begin{displaymath}
    4175\raisebox{-0.15\cellsize}{$\cir{\mathbf{8}}$}236 \xrightarrow{(5,6,7,}
    417 \raisebox{-0.15\cellsize}{$\cir{\mathbf{5}}$}2368 \xrightarrow{4,5,}   
    41  \raisebox{-0.15\cellsize}{$\cir{\mathbf{7}}$}23568 \xrightarrow{3,4,5,6,}
        \raisebox{-0.15\cellsize}{$\cir{\mathbf{4}}$}1235678 \xrightarrow{1,2,3)} 
    12345678 
  \end{displaymath}
  \caption{\label{fig:super-Y}An illustration of Algorithm~\ref{alg:super-Y} for the permutation $41758236$.}
\end{figure}

We define two involutions on reduced words for a given permutation based on the Coxeter relations for the simple transpositions.

\begin{definition}
  Given $w$ and $1 \leq i < \ell(w)$, $\swap_i$ acts on $\rho \in \Red(w)$ by \emph{commuting} $\rho_i$ and $\rho_{i+1}$ whenever $|\rho_i - \rho_{i+1}| > 1$ and the identity otherwise.
  \label{def:rex-swap}
\end{definition}

\begin{definition}
  Given $w$ and $1 < i < \ell(w)$,  $\braid_i$ acts on $\rho \in \Red(w)$ by \emph{braiding} $\rho_{i-1} \rho_i \rho_{i+1}$ to $\rho_{i} \rho_{i \pm 1} \rho_{i}$ whenever $\rho_{i-1} = \rho_{i+1} = \rho_i \pm 1$ and the identity otherwise.
  \label{def:rex-braid}
\end{definition}

\begin{figure}[ht]
  \begin{center}
    \begin{tikzpicture}[xscale=1.75,yscale=1.25,
        label/.style={%
          postaction={ decorate,%transform shape,
            decoration={ markings, mark=at position 0.75 with \node #1;}}}]
      \node at (3,5) (C5) {$(4,2,1,2,3)$};
      \node at (2,4) (B4) {$(2,4,1,2,3)$};
      \node at (4,4) (D4) {$(4,1,2,1,3)$};
      \node at (1,3) (A3) {$(2,1,4,2,3)$};
      \node at (3,3) (C3) {$(1,4,2,1,3)$};
      \node at (5,3) (E3) {$(4,1,2,3,1)$};
      \node at (0,2) (A2) {$(2,1,2,4,3)$};
      \node at (2,2) (C2) {$(1,2,4,1,3)$};
      \node at (4,2) (E2) {$(1,4,2,3,1)$};
      \node at (1,1) (B1) {$(1,2,1,4,3)$};
      \node at (3,1) (D1) {$(1,2,4,3,1)$};
      \draw[thin,label={[above]{$\swap_4$}}] (C5) -- (B4) ;
      \draw[thin,label={[above]{$\braid_3$}}](C5) -- (D4) ;
      \draw[thin,label={[above]{$\swap_3$}}] (B4) -- (A3) ;
      \draw[thin,label={[above]{$\swap_4$}}] (D4) -- (C3) ;
      \draw[thin,label={[above]{$\swap_1$}}] (D4) -- (E3) ;
      \draw[thin,label={[above]{$\swap_2$}}] (A3) -- (A2) ;
      \draw[thin,label={[above]{$\swap_3$}}] (C3) -- (C2) ;
      \draw[thin,label={[above]{$\swap_1$}}] (C3) -- (E2) ;
      \draw[thin,label={[above]{$\swap_4$}}] (E3) -- (E2) ;
      \draw[thin,label={[above]{$\braid_4$}}](A2) -- (B1) ;
      \draw[thin,label={[above]{$\swap_2$}}] (C2) -- (B1) ;
      \draw[thin,label={[above]{$\swap_1$}}] (C2) -- (D1) ;
      \draw[thin,label={[above]{$\swap_3$}}] (E2) -- (D1) ;
    \end{tikzpicture}
  \caption{\label{fig:rex-relations}An illustration of the Coxeter moves on $\Red(42153)$.}
  \end{center}
\end{figure}

We refer to $\swap_i$ as a commutation, to $\braid_i$ as a Yang--Baxter move, and to either as a Coxeter move. For examples of Coxeter moves on reduced words, see Fig.~\ref{fig:rex-relations}.

It follows from classical work of Tits \cite{Tit69} that the maps $\swap_i$ and $\braid_i$ are well-defined involutions on $\Red(w)$, and that the graph on $\Red(w)$ with edges given by $\swap_i$ and $\braid_i$ is connected. Pushing this further, Fig.~\ref{fig:rex-relations} suggests a ranked poset structure on reduced words for $w$  with unique maximal element equal to the super-Yamanouchi reduced word for $w$. The following definition measures the \emph{minimum} number of commutations and Yang--Baxter moves needed to get from a given reduced word to the super-Yamanouchi one.

\begin{definition}
  Given $\rho \in \Red(w)$, define the \newword{inversion number of $\rho$} by
  \begin{equation}
    \inv(\rho) = \ell(v(\rho)) - \sum_i \left( \pi_i - \rho_{i} \right),
  \end{equation}
  where $\pi \in \Red(w)$ is super-Yamanouchi and $v(\rho)$ is defined by Algorithm~\ref{alg:rex-perm}.
  \label{def:rex-inv}
\end{definition}

\begin{algorithm}[ht]
  \begin{algorithmic}[1]
    \Procedure{perm}{$\rho$}
    \State $\pi\gets $ super-Yamanouchi reduced word for $w$ 
    \State $\perm\gets $ identity permutation of $\mathfrak{S}_{\ell(w)}$ 
    \For{$i$ from $\ell(w)$ to $1$ by $-1$} 
    \State $k \gets \pi_i$ 
    \For{$j$ from $\ell(w)$ to $1$ by $-1$} 
    \If{$\rho_j = k$ and is not already paired} 
    \State pair $\rho_j$ with $\pi_i$
    \State $\perm_i \gets j$
    \State \textbf{break}
    \ElsIf{$\rho_j = k-1$ and is not already paired} 
    \State $k \gets k-1$
    \State \textbf{next}
    \EndIf
    \EndFor
    \EndFor
    \State \textbf{return} $\perm$
    \EndProcedure
  \end{algorithmic}
  \caption{\label{alg:rex-perm}Permutation of a reduced word}
\end{algorithm}

\begin{example}[Inversions for reduced words]
  Let $\rho = (5,6,3,4,5,7,3,1,4,2,3,6)$. The super-Yamanouchi reduced word is $\pi = (5,6,7,4,5,3,4,5,6,1,2,3)$. Following Algorithm~\ref{alg:rex-perm}, the first three iterations of the \textbf{for} loop on line 4 ($i=12,11,10$) will be satisfied by the \textbf{if} condition of line 7, resulting in $\pi_{12}=5$, $\pi_{11}=6$, $\pi_{10}=7$ paired with $\rho_{12} = 5$, $\rho_{11} = 6$, $\rho_{7} = 7$, respectively.

  On the fourth iteration of the \textbf{for} loop on line 4 ($i=9$), we set $k= \pi_9 = 4$ on line 5, and on the third iteration of the \textbf{for} loop on line 6 ($j=10$), the \textbf{else if} condition on line 11 is met, and we decrement $k=3$. Then, on the seventh iteration of the \textbf{for} loop on line 6 ($j=6$), the \textbf{if} condition of line 7 is met and we pair $\pi_{9}=4$ with $\rho_{6}=3$. Continuing thus, we pair values of $\pi$ from left to right with values of $\rho$ as illustrated in Fig.~\ref{fig:rex-inv}. 

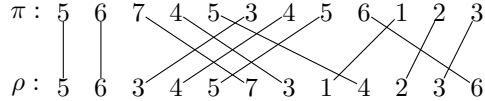
\begin{figure}[ht]
  \begin{center}
    \begin{tikzpicture}[every node/.style={inner sep=0pt},scale=0.5]
      \node at (0,2)  (P0) {$\pi:$};
      \node at (1,2)  (P12) {$5$};
      \node at (2,2)  (P11) {$6$};
      \node at (3,2)  (P10) {$7$};
      \node at (4,2)  (P9)  {$4$};
      \node at (5,2)  (P8)  {$5$};
      \node at (6,2)  (P7)  {$3$};
      \node at (7,2)  (P6)  {$4$};
      \node at (8,2)  (P5)  {$5$};
      \node at (9,2)  (P4)  {$6$};
      \node at (10,2) (P3)  {$1$};
      \node at (11,2) (P2)  {$2$};
      \node at (12,2) (P1)  {$3$};
      \node at (0,0)  (R0) {$\rho:$};
      \node at (1,0)  (R12) {$5$};
      \node at (2,0)  (R11) {$6$};
      \node at (3,0)  (R10) {$3$};
      \node at (4,0)  (R9)  {$4$};
      \node at (5,0)  (R8)  {$5$};
      \node at (6,0)  (R7)  {$7$};
      \node at (7,0)  (R6)  {$3$};
      \node at (8,0)  (R5)  {$1$};
      \node at (9,0)  (R4)  {$4$};
      \node at (10,0) (R3)  {$2$};
      \node at (11,0) (R2)  {$3$};
      \node at (12,0) (R1)  {$6$};
      \draw  (P1) -- (R2) ;
      \draw  (P2) -- (R3) ;
      \draw  (P3) -- (R5) ;
      \draw  (P4) -- (R1) ;
      \draw  (P5) -- (R8) ;
      \draw  (P6) -- (R9) ;
      \draw  (P7) -- (R10);
      \draw  (P8) -- (R4) ;
      \draw  (P9) -- (R6) ;
      \draw (P10) -- (R7) ;
      \draw (P11) -- (R11);
      \draw (P12) -- (R12);
    \end{tikzpicture}
    \caption{\label{fig:rex-inv}An illustration of the pairings in Algorithm~\ref{alg:rex-perm} for the reduced word $\rho = (5,6,3,4,5,7,3,1,4,2,3,6)$.}
  \end{center}
\end{figure}

  Therefore $\perm(\rho) = 2 \, 3 \, 5 \, 1 \, 8 \, 9 \, 1\!0 \, 4 \, 6 \, 7 \, 1\!1 \, 1\!2$ and so $\inv(\rho) = 13-2 = 11$. Note
  \[ \rho = \swap_7 \, \swap_8 \, \swap_9 \, \swap_4 \, \swap_6 \, \braid_8 \, \braid_6 \, \swap_7 \, \swap_1 \, \swap_2 \, \swap_3 \, \pi, \]
  which is a sequence of $11$ involutions, exactly $2$ of which are Yang--Baxter moves.
  \label{ex:inversion-R}
\end{example}

\begin{theorem}
  For $\rho \in \Red(w)$, $\inv(\rho)$ is a well-defined non-negative integer. Moreover, there exists a sequence $f = f_{\inv(\rho)} \cdots f_1$ of Coxeter moves, i.e. $f_j = \swap_i$ or $\braid_i$, such that $f(\rho)$ is super-Yamanouchi, and for any sequence $g = g_m \cdots g_1$ of Coxeter moves such that $g(\rho)$ is super-Yamanouchi, we have $m \geq \inv(\rho)$.
  \label{thm:rex-super}
\end{theorem}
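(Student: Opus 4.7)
My plan is to prove the theorem by establishing three properties: (a) $\inv(\rho)$ is a well-defined non-negative integer; (b) every Coxeter move changes $\inv(\rho)$ by at most one in absolute value; and (c) whenever $\rho \neq \pi$, some Coxeter move strictly decreases $\inv(\rho)$. Together, (a) ensures the statistic is meaningful, (b) yields the minimality bound $m \geq \inv(\rho)$ for any sequence of Coxeter moves from $\rho$ to $\pi$, and iterating (c) constructs an explicit sequence of exactly $\inv(\rho)$ moves; this gives both conclusions of the theorem.

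For (a), I would show that the inner loop of Algorithm~\ref{alg:rex-perm} always terminates in a successful pairing, so that $\perm(\rho) \in \mathfrak{S}_{\ell(w)}$. The key is a wire-tracking interpretation: each decrement $k \to k-1$ triggered when the algorithm encounters an unpaired $\rho_j = k-1$ corresponds to rerouting the search onto an adjacent wire in the wiring diagram of $\rho$, and since $\pi$ and $\rho$ realize the same permutation, this routing is guaranteed to end at a valid partner. Non-negativity of $\inv(\rho)$ then reduces to the inequality $\ell(\perm) \geq \sum_j (\pi_j - \rho_j)$, which I would derive by showing that each decrement performed by the algorithm forces at least one inversion in $\perm$.

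Property (b) is the technical heart of the proof. A commutation $\swap_i$ preserves $\sum_j (\pi_j - \rho_j)$ (the multiset of letters in $\rho$ is unchanged) and modifies $\perm$ by transposing the values $i$ and $i+1$ in its image, which I would verify by tracing the algorithm on both $\rho$ and its commuted version; this yields $\Delta \ell(\perm) = \pm 1$ and hence $\Delta \inv = \pm 1$. A Yang--Baxter move $\braid_i$ changes $\rho_{i-1}+\rho_i+\rho_{i+1}$ by exactly $\mp 1$, contributing $\pm 1$ to $\sum_j (\pi_j - \rho_j)$; the accompanying change to $\perm$ must be tracked via a case analysis on how the algorithm's pairings at the three affected positions rearrange. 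I expect this braid-move verification to be the main technical obstacle, since the pairing rule depends globally on the distribution of $k$ and $k-1$ values throughout $\rho$, so a local braid can shift the pairing in non-local ways that must be controlled.

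For (c), I would argue by descent on the super-Yamanouchi structure of Definition~\ref{def:re-super}: if $\rho \neq \pi$, then some run of $\rho$ fails to be an interval, or two adjacent runs violate the decreasing-minimum condition. In either case, a specific commutation or Yang--Baxter move pushing $\rho$ closer to super-Yamanouchi form can be identified, and using the setup of (b) one checks directly that this move decreases $\inv$ by one. Combining (a), (b), and (c) completes the proof.
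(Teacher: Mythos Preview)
Your three-part strategy is sound and would yield the theorem, but it differs from the paper's argument in one structural respect. The paper does not attempt your step (c); instead it invokes Tits' result that the Coxeter-move graph on $\Red(w)$ is connected and inducts on the minimum number of moves separating $\rho$ from $\pi$. Concretely, the paper establishes the precise formulas $\perm(\swap_i\rho)=s_i\perm(\rho)$ and $\perm(\braid_i\rho)\in\{s_{i-1}s_i\perm(\rho),\,s_is_{i-1}\perm(\rho)\}$, from which $\inv$ changes by \emph{exactly} $\pm 1$ under each nontrivial move (not merely ``at most one'' as in your (b)), with the sign determined by the direction of the move. The inductive step then reads: the theorem holds for $\rho$ if and only if it holds for any Coxeter neighbor of $\rho$, and the base case is $\pi$ itself. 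Well-definedness and non-negativity are absorbed into this induction rather than verified directly as in your (a).

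Your approach trades the appeal to Tits for the constructive content of (c): you would exhibit, for each non-super-Yamanouchi $\rho$, an explicit $\inv$-decreasing move. This is more self-contained and in principle yields an algorithm, but it requires you to know the \emph{sign} of the change in $\inv$ for the specific move you choose, so in practice you will end up proving the same exact-$\pm 1$ formulas the paper asserts. The paper's route is shorter once Tits is granted; yours avoids the external input at the cost of the extra case analysis in (c). Either way, the step you flag as the main technical obstacle---tracking how Algorithm~\ref{alg:rex-perm}'s pairings shift under a braid move---is exactly the point the paper states without detailed justification, so your caution there is warranted.
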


\begin{proof}
  We claim the theorem holds for $\rho$ if and only if it holds for $\swap_i(\rho)$. This is vacuously true if $\swap_i$ acts trivially on $\rho$. Otherwise, $\swap_i(\rho)$ will have permutation $s_i \perm(\rho)$, and, since the letters of $\rho$ and $\swap_i(\rho)$ are the same, we have
  \begin{eqnarray*}
    \inv(\swap_i \rho) & = & \inv (s_i \perm(\rho)) - \sum \left( \pi_j - (\swap_i\rho)_{j} \right) \\
    & = & \inv(\perm(\rho)) \pm 1 - \sum \left( \pi_j - \rho_{j} \right) = \inv(\rho) \pm 1.
  \end{eqnarray*}
  Furthermore, $\inv(\swap_i \rho) = \inv(\rho)+1$ precisely when $i$ is left of $i+1$ in $\perm(\rho)$. 

  Next we claim the theorem holds for $\rho$ if and only if it holds for $\braid_i(\rho)$. If $\braid_i$ acts trivially on $\rho$, the claim is vacuously true. Otherwise, $\braid_i(\rho)$ will have permutation $s_i s_{i-1} \perm(\rho)$ or $s_{i-1} s_{i} \perm(\rho)$, the former when $\rho_{i\pm 1} = \rho_i +1$ and the latter when $\rho_{i\pm 1} = \rho_i -1$. Assuming the former, we have
  \begin{eqnarray*}
    \inv(\braid_i \rho) & = & \inv (s_{i-1} s_i \perm(\rho)) - \sum \left( \pi_j - (\braid_i\rho)_{j} \right) \\
    & = & \inv(\perm(\rho)) + 2 - \big( \sum \left( \pi_j - \rho_{j} \right) + 1 \big) = \inv(\rho) + 1,
  \end{eqnarray*}
  and, by the same computation, $\inv(\braid_i \rho) = \inv(\rho)-1$ in the latter case.

  Recall from earlier that any two reduced words for $w$ can be transformed into one another by a sequence of Coxeter moves. Let $m$ be the minimum number of Coxeter moves needed to transform $\rho$ into the super-Yamanouchi reduced word. If $m=0$, then $\rho$ is super-Yamanouchi, in which case the permutation for $\rho$ is the identity and $\inv(\rho) = 0$, so the theorem holds. Assume, for induction, that the theorem holds for any $n<m$, and suppose $\rho = f_m \cdots f_1 \pi$, where $\pi$ is super-Yamanouchi and $f_j$ is $\swap_i$ or $\braid_i$ for some $i$. By induction, the result holds for $f_{m-1} \cdots f_1 \pi = f_m \rho$, and so, by the claims, it holds for $\rho$ as well. 
\end{proof}

Thus we may define the \newword{inversion poset for reduced words} as follows.

\begin{corollary}
  For $w$ a permutation, the partial order on $\Red(w)$ given by the transitive closure of covering relations
  \begin{itemize}
  \item $\rho > \swap_i\rho$ if $\inv(\swap_i\rho) = \inv(\rho)+1$, and
  \item $\rho > \braid_i\rho$ if $\inv(\braid_i\rho) = \inv(\rho)+1$
  \end{itemize}
  makes $\Red(w)$ into a ranked partially ordered set with unique maximal element.
  \label{cor:rex-poset}
\end{corollary}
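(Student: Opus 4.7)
The plan is to use two facts established in the proof of Theorem~\ref{thm:rex-super}: first, every nontrivial $\swap_i$ or $\braid_i$ changes $\inv$ by exactly $\pm 1$; second, for every $\rho \in \Red(w)$ there is a sequence of exactly $\inv(\rho)$ Coxeter moves taking $\rho$ to the super-Yamanouchi word $\pi$, and no shorter sequence exists. Granted these, the corollary reduces to bookkeeping with $\inv$ as a (reverse) rank function.

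First I would observe that $\pi$ is the unique element with $\inv(\pi) = 0$, since $\inv$ is the minimum Coxeter-move distance to $\pi$ and this minimum vanishes precisely at $\pi$. For antisymmetry, each covering relation $\rho \covers \sigma$ satisfies $\inv(\sigma) = \inv(\rho) + 1$ by definition, so chaining covers in a relation $\rho > \sigma$ in the transitive closure forces $\inv(\sigma) > \inv(\rho)$; this rules out having both $\rho > \sigma$ and $\sigma > \rho$, and so the transitive closure is a genuine partial order.

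For the ranked property, the function $r(\rho) = M - \inv(\rho)$, where $M = \max_{\sigma \in \Red(w)} \inv(\sigma)$, is non-negative integer valued and satisfies $r(\rho) = r(\sigma) + 1$ whenever $\rho \covers \sigma$, so it serves as a rank function. For the unique maximality of $\pi$, suppose $\rho \neq \pi$, so $\inv(\rho) > 0$. Theorem~\ref{thm:rex-super} provides a sequence $f_{\inv(\rho)} \cdots f_1$ of Coxeter moves taking $\rho$ to $\pi$. Since each $f_j$ changes $\inv$ by $\pm 1$ and the total change over $\inv(\rho)$ steps is $-\inv(\rho)$, every $f_j$ must decrease $\inv$ by exactly $1$. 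In particular $\rho \covers f_1 \rho$ in our poset, so $\rho$ is not maximal, and iterating produces a saturated chain from $\rho$ up to $\pi$ of length $\inv(\rho)$.

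The only mildly delicate step is the observation that an optimal Coxeter sequence from $\rho$ to $\pi$ consists entirely of $\inv$-decreasing moves; this follows by a short telescoping argument from the $\pm 1$-change property. Once it is in hand, the covering graph and the grading by $\inv$ align exactly, and the three conclusions (partial order, ranked, unique maximum) fall out together.
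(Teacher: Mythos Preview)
Your argument is correct and matches the paper's implicit reasoning: the corollary is stated without proof, as it follows directly from Theorem~\ref{thm:rex-super} via exactly the telescoping and $\pm 1$ bookkeeping you describe. One notational slip to fix: since $\inv(f_1\rho) = \inv(\rho) - 1$, the covering relation reads $f_1\rho \covers \rho$ rather than $\rho \covers f_1\rho$; your immediate conclusion that $\rho$ is not maximal (and the phrase ``from $\rho$ up to $\pi$'') shows you had the correct direction in mind.
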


Notice the ranking is the \emph{co-inversion number}, so that the super-Yamanouchi word is the unique \emph{maximal} element in line with convention from Schubert calculus.

From the proof of Theorem~\ref{thm:rex-super}, can, in fact, count the minimum number of \emph{Yang--Baxter moves} on any shortest path from a reduced word to the super-Yamanouchi reduced word by considering the offset between the length of the permutation of $\rho$ and the inversion number of $\rho$. More generally, we have the following. 

\begin{corollary}
  For $\rho,\sigma \in \Red(w)$, and $f = f_{k} \cdots f_1$ any minimal length sequence of Coxeter moves, i.e. $f_j = \swap_i$ or $\braid_i$, such that $f(\rho) = \sigma$, the number of Coxeter moves that are Yang--Baxter moves is given by
  \begin{equation}
    \#\{ j \mid f_j = \braid_i \mbox{ for some } i\} = \sum_i \left| \rho_i - \sigma_{\left(\perm(\sigma) \perm(\rho)^{-1}\right)_i} \right| .
  \end{equation}
  \label{cor:yang-baxter}
\end{corollary}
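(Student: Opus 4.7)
The plan is to extend the analysis from the proof of Theorem~\ref{thm:rex-super} from the target $\sigma = \pi$ to an arbitrary $\sigma \in \Red(w)$. I would write $\tau = \perm(\sigma)\perm(\rho)^{-1}$, $L = \ell(\tau)$, and $D = \sum_i |\rho_i - \sigma_{\tau_i}|$, so that the claimed braid count is $D$; when $\sigma = \pi$, this $D$ reduces to $\sum_i(\pi_i - \rho_i)$, the quantity implicit in the proof of Theorem~\ref{thm:rex-super}. The first step is to track how $D$ behaves under Coxeter moves. A nontrivial commutation $\swap_j$ multiplies $\perm(\rho)$ by $s_j$ on the left, hence $\tau$ by $s_j$ on the right; the summands at positions $j$ and $j+1$ exchange, and since $\rho_j$ and $\rho_{j+1}$ also swap, $D$ is invariant. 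A nontrivial Yang--Baxter $\braid_j$ multiplies $\perm(\rho)$ on the left by a length-two element, so $\tau$ is multiplied on the right by an adjacent pair $s_a s_b$ with $\{a,b\} = \{j-1,j\}$, cyclically rotating $\tau(j-1), \tau(j), \tau(j+1)$. Together with the simultaneous $\pm 1$ shift of $\rho_{j-1}, \rho_j, \rho_{j+1}$, a three-term expansion of the affected summands leaves all but one contribution cancelling, yielding $\Delta D = \pm 1$. Since $D(\sigma,\sigma) = 0$, telescoping along any Coxeter sequence from $\rho$ to $\sigma$ immediately gives the lower bound: the number of Yang--Baxter moves is at least $D(\rho,\sigma)$.

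For the matching upper bound on minimal sequences, the plan is to establish the distance formula $\mathrm{dist}(\rho, \sigma) = L - D$ by generalizing Theorem~\ref{thm:rex-super}. Following the strategy of that theorem, I would show that every Coxeter move changes $L - D$ by exactly $\pm 1$ (for commutations this is immediate; for Yang--Baxter moves it reduces to a six-case analysis of the cyclic rotation of $\tau(j-1), \tau(j), \tau(j+1)$) and that one can always find a Coxeter move strictly decreasing $L - D$ whenever $L > D$, by a construction paralleling Algorithm~\ref{alg:rex-perm}. On any minimal sequence of length $L - D$, every step must decrease $L - D$ by one; since each commutation preserves $D$ and each Yang--Baxter shifts $D$ by $\pm 1$, a short bookkeeping forces every Yang--Baxter move on such a sequence to contribute $\Delta D = -1$, so the total number of Yang--Baxter moves is exactly $D(\rho,\sigma)$, proving the corollary.

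The hard part will be the compatibility claim underlying the distance formula: ruling out, for Yang--Baxter moves, the a priori combinations of $(\Delta L, \Delta D)$ that would give $|\Delta(L - D)| = 3$. This requires exploiting the structural constraints that the super-Yamanouchi matching of Algorithm~\ref{alg:rex-perm} places on the relative order of $\tau(j-1), \tau(j), \tau(j+1)$, correlating it with the local braid pattern $\rho_{j-1}\rho_j\rho_{j+1}$. Once this compatibility is established, the remainder of the argument is formal and parallels the $\pm 1$ step-change analysis already carried out in the proof of Theorem~\ref{thm:rex-super}.
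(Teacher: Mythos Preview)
Your lower-bound half is correct: along any Coxeter path, commutations leave $D=\sum_i|\rho_i-\sigma_{\tau_i}|$ fixed and each Yang--Baxter move shifts $D$ by exactly $\pm 1$ (your three-term cancellation indeed collapses to $|k-a|-|k{+}1-a|=\pm1$ for a single value $a=\sigma_{\tau_{j-1}}$), so every path uses at least $D$ braid moves.

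The gap is in the upper-bound half. Your plan hinges on the distance formula $\mathrm{dist}(\rho,\sigma)=L-D$, and this is \emph{false} for general $\sigma$. The paper itself supplies the counterexample immediately after the corollary: for $\rho=(1,2,1,3,2,1)$ and $\sigma=(1,3,2,1,3,2)$ in $\Red(4321)$ one has $L-D=4-2=2$, yet $\mathrm{dist}(\rho,\sigma)=4$ (see Figure~\ref{fig:rex-metric} and Figure~\ref{fig:R321}). Hence at least one of your two proposed ingredients---that every move changes $L-D$ by exactly $\pm 1$, or that a strictly $(L-D)$-decreasing move exists whenever $L>D$---must fail when $\sigma\neq\pi$, and the bookkeeping that follows (``on any minimal sequence of length $L-D$, every step must decrease $L-D$ by one'') rests on a false premise: minimal sequences need not have length $L-D$.

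The paper does not give a separate proof of the corollary; it only asserts that the general statement follows from the analysis in the proof of Theorem~\ref{thm:rex-super}, which in fact establishes just the special case $\sigma=\pi$. So your lower bound genuinely extends what is written there, but the matching upper bound for arbitrary $\sigma$ requires a different mechanism---the naive relative quantity $L-D$ does not control distance between arbitrary reduced words, as the paper explicitly warns.
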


While one can hope to define an explicit metric on reduced words analogous to Kendall's $\tau$ metric on permutations \cite{Ken38} by 
\begin{equation}
  \inv(\rho,\sigma) = \ell(\perm(\rho,\sigma)) - \sum_i \left| \rho_i - \sigma_{\perm(\rho,\sigma)_i} \right|,
\end{equation}
where $\perm(\rho,\sigma) = \perm(\sigma) \perm(\rho)^{-1}$, this does not always give the correct minimum distance between arbitrary reduced words. 

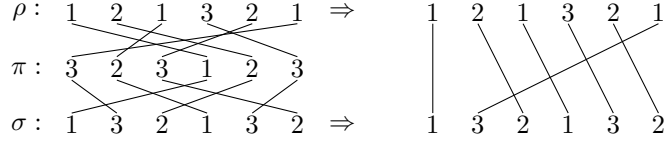
\begin{figure}[ht]
  \begin{center}
    \begin{tikzpicture}[every node/.style={inner sep=1pt},xscale=0.6,yscale=0.5]
      \node at (0,3)  (R0) {$\rho:$};
      \node at (1,3)  (R6) {$1$};
      \node at (2,3)  (R5) {$2$};
      \node at (3,3)  (R4) {$1$};
      \node at (4,3)  (R3) {$3$};
      \node at (5,3)  (R2) {$2$};
      \node at (6,3)  (R1) {$1$};
      \node at (0,1.5)  (P0) {$\pi:$};
      \node at (1,1.5)  (P6) {$3$};
      \node at (2,1.5)  (P5) {$2$};
      \node at (3,1.5)  (P4) {$3$};
      \node at (4,1.5)  (P3) {$1$};
      \node at (5,1.5)  (P2) {$2$};
      \node at (6,1.5)  (P1) {$3$};
      \node at (0,0)  (S0) {$\sigma:$};
      \node at (1,0)  (S6) {$1$};
      \node at (2,0)  (S5) {$3$};
      \node at (3,0)  (S4) {$2$};
      \node at (4,0)  (S3) {$1$};
      \node at (5,0)  (S2) {$3$};
      \node at (6,0)  (S1) {$2$};
      \draw  (P1.north) -- (R3.south) ;
      \draw  (P2.north) -- (R5.south) ;
      \draw  (P3.north) -- (R6.south) ;
      \draw  (P4.north) -- (R2.south) ;
      \draw  (P5.north) -- (R4.south) ;
      \draw  (P6.north) -- (R1.south) ;
      \draw  (P1.south) -- (S2.north) ;
      \draw  (P2.south) -- (S4.north) ;
      \draw  (P3.south) -- (S6.north) ;
      \draw  (P4.south) -- (S1.north) ;
      \draw  (P5.south) -- (S3.north) ;
      \draw  (P6.south) -- (S5.north) ;
      \node at (7,3) {$\Rightarrow$};
      \node at (9,3)  (RR6) {$1$};
      \node at (10,3) (RR5) {$2$};
      \node at (11,3) (RR4) {$1$};
      \node at (12,3) (RR3) {$3$};
      \node at (13,3) (RR2) {$2$};
      \node at (14,3) (RR1) {$1$};
      \node at (7,0) {$\Rightarrow$};
      \node at (9,0)  (SS6) {$1$};
      \node at (10,0) (SS5) {$3$};
      \node at (11,0) (SS4) {$2$};
      \node at (12,0) (SS3) {$1$};
      \node at (13,0) (SS2) {$3$};
      \node at (14,0) (SS1) {$2$};
      \draw  (SS2.north) -- (RR3.south) ;
      \draw  (SS4.north) -- (RR5.south) ;
      \draw  (SS6.north) -- (RR6.south) ;
      \draw  (SS1.north) -- (RR2.south) ;
      \draw  (SS3.north) -- (RR4.south) ;
      \draw  (SS5.north) -- (RR1.south) ;
    \end{tikzpicture}
    \caption{\label{fig:rex-metric}An illustration of the permutation of a pair of reduced words for $w = 42153$. Note this does not measure distance.}
  \end{center}
\end{figure}

\begin{example}[Barrier to inversion metric on reduced words]
  Let $\rho = (1,2,1,3,2,1)$ and $\sigma = (1,3,2,1,3,2)$, both reduced words for the long permutation $w_0^{(4)}=4321$. Then $\pi = (3,2,3,1,2,3)$ is the super-Yamanouchi word, and following Algorithm~\ref{alg:rex-perm}, we have the two pairings indicated on the left side of Fig.~\ref{fig:rex-metric}. Composing the diagram gives $\perm(\rho,\sigma) = 51234$, and so we have
  \[ \inv(\rho,\sigma) = \ell(51234) - |1-1| - |2-2| - |1-1| - |3-3| - |2-2| - |1-3| = 4-2 = 2. \]
  Observe, from Fig.~\ref{fig:R321}, any shortest path from $\rho$ to $\sigma$ has length $4$ and uses exactly $2$ Yang--Baxter moves. Thus the naive inversion number for arbitrary pairs does not work to give the correct minimum distance.
\end{example}

\begin{figure}[ht]
  \begin{center}
    \begin{tikzpicture}[xscale=1.45,yscale=1,
        label/.style={%
          postaction={ decorate,%transform shape,
            decoration={ markings, mark=at position 0.75 with \node #1;}}}]
      \node at (3,7) (R37) {$(3,2,3,1,2,3)$};
      \node at (2,6) (R26) {$(2,3,2,1,2,3)$};
      \node at (4,6) (R46) {$(3,2,1,3,2,3)$};
      \node at (1,5) (R15) {$(2,3,1,2,1,3)$};
      \node at (5,5) (R55) {$(3,2,1,2,3,2)$};
      \node at (0,4) (R04) {$(2,3,1,2,3,1)$};
      \node at (2,4) (R24) {$(2,1,3,2,1,3)$};
      \node at (6,4) (R64) {$(3,1,2,1,3,2)$};
      \node at (1,3) (R13) {$(2,1,3,2,3,1)$};
      \node at (5,3) (R53) {$(1,3,2,1,3,2)$};
      \node at (7,3) (R73) {$(3,1,2,3,1,2)$};
      \node at (2,2) (R22) {$(2,1,2,3,2,1)$};
      \node at (6,2) (R62) {$(1,3,2,3,1,2)$};
      \node at (3,1) (R31) {$(1,2,1,3,2,1)$};
      \node at (5,1) (R51) {$(1,2,3,2,1,2)$};
      \node at (4,0) (R40) {$(1,2,3,1,2,1)$};
      \draw[thin,label={[above]{$\braid_5$}}](R37) -- (R26) ;
      \draw[thin,label={[above]{$\swap_3$}}] (R37) -- (R46) ;
      \draw[thin,label={[above]{$\braid_3$}}](R26) -- (R15) ;
      \draw[thin,label={[above]{$\braid_2$}}](R46) -- (R55) ;
      \draw[thin,label={[above]{$\swap_1$}}] (R15) -- (R04) ;
      \draw[thin,label={[above]{$\swap_4$}}] (R15) -- (R24) ;
      \draw[thin,label={[above]{$\braid_4$}}](R55) -- (R64) ;
      \draw[thin,label={[above]{$\swap_4$}}] (R04) -- (R13) ;
      \draw[thin,label={[above]{$\swap_1$}}] (R24) -- (R13) ;
      \draw[thin,label={[above]{$\swap_5$}}] (R64) -- (R53) ;
      \draw[thin,label={[above]{$\swap_2$}}] (R64) -- (R73) ;
      \draw[thin,label={[above]{$\braid_3$}}](R13) -- (R22) ;
      \draw[thin,label={[above]{$\swap_2$}}] (R53) -- (R62) ;
      \draw[thin,label={[above]{$\swap_5$}}] (R73) -- (R62) ;
      \draw[thin,label={[above]{$\braid_5$}}](R22) -- (R31) ;
      \draw[thin,label={[above]{$\braid_4$}}](R62) -- (R51) ;
      \draw[thin,label={[above]{$\swap_3$}}] (R31) -- (R40) ;
      \draw[thin,label={[above]{$\braid_2$}}](R51) -- (R40) ;
    \end{tikzpicture}
  \caption{\label{fig:R321}An illustration of the Coxeter moves on $\Red(4321)$.}
  \end{center}
\end{figure}

%%%%%%%%%%%%%%%%%%%%%%%%%%%%%%%%%%%%%%%%%%%%%%%%%%%%%%%%%%%%%%%%
%
\section{Balanced tableaux}
%
%%%%%%%%%%%%%%%%%%%%%%%%%%%%%%%%%%%%%%%%%%%%%%%%%%%%%%%%%%%%%%%%
\label{sec:balanced}

The calculation of the inversion number for a reduced word is admittedly complicated, made more so by the requirement that one first compute the super-Yamanouchi word. By shifting our paradigm to another model for reduced words, this statistic becomes more natural and much simpler to compute.

The \newword{Rothe diagram} (also called the \newword{inversion diagram}) of a permutation $w$, denoted by $\D(w)$, is the following subset of cells in the first quadrant of the plane,
\begin{equation}
  \D(w) = \{ (i,w_j) \mid i<j \mbox{ and } w_i > w_j \} \subset \mathbb{Z}^{+}\times\mathbb{Z}^{+}.
  \label{e:rothe}
\end{equation}
The Rothe diagram of $w$ gives a graphical representation of the inversion pairs of $w$. In particular, the number of cells in $\D(w)$ is simply $\ell(w)$. 

\begin{example}[Rothe diagram]
  To draw the Rothe diagram for $w=41758236$, we write $w$ vertically along the $y$-axis with $w_i$ at height $i$, and we label the cells horizontally along the $x$-axis with positive integers, as illustrated in Fig.~\ref{fig:Rothe}. When computing the cells in row $3$, for instance, we consider $w_3=7$ and place cells in columns $5, 2, 3, 6$ since these occur to the right of and are smaller than $7$.
%  \[ 4 \ 1 \ \raisebox{-0.3ex}{$\cir{\mathbf{7}}$} \ \boxed{5} \ 8 \ \boxed{2} \ \boxed{3} \ \boxed{6} \] 
\end{example}

\begin{figure}[ht]
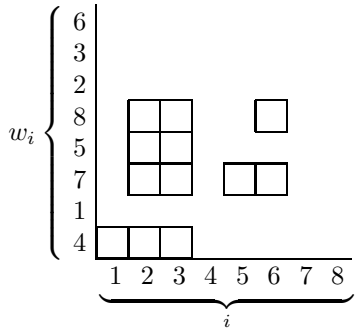

  \begin{displaymath}
    \begin{array}{r}
      w_i \left\{  
      \raisebox{3.25\cellsize}{$%
      \notableau{6 \\ 3 \\ 2 \\ 8 \\ 5 \\ 7 \\ 1 \\ 4}
      \vline\tableau{ \\ \\ \\ & \ & \ & & & \ & & \\ & \ & \ \\ & \ & \ & & \ & \ \\ \\ \ & \ & \ \\\hline }$}
      \right. \\
      \underbrace{\notableau{1 & 2 & 3 & 4 & 5 & 6 & 7 & 8}}_i
    \end{array}
  \end{displaymath}
  \caption{\label{fig:Rothe}The Rothe diagram $\D(w)$ for $w=41758236$.}
\end{figure}

The Rothe diagram of $w$ provides an alternative method from that described in Proposition~\ref{prop:super-Y} for computing the super-Yamanouchi reduced word for $w$.

\begin{definition}
  For $w$ a permutation, the \newword{row-interval filling} for $D(w)$ is the positive integer filling with entries $i, i+1, i+2, \ldots$ in row $i$, from left to right. 
\end{definition}

\begin{example}[Row-interval filling]
  The row-interval filling for $\D(41758236)$ is shown in Fig.~\ref{fig:row}. Comparing with Ex.~\ref{ex:super-Y}, notice that the row reading word of this filling, i.e. the word obtained by reading the rows from left to right beginning with the highest, is precisely the super-Yamanouchi word for $w$.
\end{example}

\begin{figure}[ht]
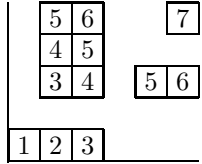

  \begin{displaymath}
    \vline\tableau{ & 5 & 6 & & & 7 \\ & 4 & 5 \\ & 3 & 4 & & 5 & 6 \\ \\ 1 & 2 & 3 \\\hline }
  \end{displaymath}
  \caption{\label{fig:row}The row-interval filling of $\D(41758236)$.}
\end{figure}

\begin{proposition}
  The row reading word of the row-interval filling for $w$ is precisely the super-Yamanouchi reduced word for $w$.
  \label{prop:super-bal}
\end{proposition}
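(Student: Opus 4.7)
The plan is to verify that Algorithm~\ref{alg:super-Y}, which builds the super-Yamanouchi reduced word $\pi$ for $w$, processes the non-empty rows of $\D(w)$ from top to bottom, and that each iteration contributes precisely the entries of its row in the row-interval filling, read from left to right.

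First, I would establish an invariant on the main \textbf{while} loop: at the start of the iteration processing index $i$, the current permutation $v$ satisfies $v_i = w_i$, and the suffix $v_{i+1}, v_{i+2}, \ldots, v_n$ is strictly increasing. The invariant is proved by induction, using the fact that every earlier iteration chose some $i' > i$ (the indices strictly decrease across iterations, since after the iteration at $i'$ the suffix from $i'$ onward is increasing, forcing the next rightmost descent to lie at a smaller position). Consequently, earlier iterations act only on positions $\geq i' > i$, so $v_i = w_i$ and the set of values at positions $>i$ remains $\{w_{i+1},\ldots,w_n\}$. The inductive step checks that the operation $s_{j-2}\cdots s_i v$ of line 8 slides the smaller values at positions $i+1,\ldots,j-1$ one slot left and deposits $w_i$ at position $j-1$, preserving a strictly increasing suffix from position $i$ onward.

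Second, using the invariant, the number of positions $>i$ in $v$ with value $<w_i$ equals $|\{j>i : w_j < w_i\}|$, which is exactly the number $r$ of cells in row $i$ of $\D(w)$. Because the suffix from $i+1$ is increasing, these $r$ smaller values occupy positions $i+1,\ldots,i+r$ consecutively, so line 6 computes $j = i+r+1$ and line 7 appends the block $i, i+1, \ldots, i+r-1$. By definition of the row-interval filling, this is precisely the content of row $i$ read from left to right.

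Finally, the iteration indices $i_1 > i_2 > \cdots$ are exactly the row indices of the non-empty rows of $\D(w)$: an empty row has $w_j > w_i$ for all $j>i$, so by the invariant position $i$ is never a descent in any $v$; conversely every non-empty row contains an inversion that must be resolved and thus must appear as the rightmost descent at some stage. Since the row reading word traverses non-empty rows from top to bottom in decreasing row index, concatenating the appended blocks across iterations yields exactly the row reading word of the row-interval filling. The only technically delicate step is the inductive verification of the invariant; once that is in hand, the remaining identifications are a matter of direct inspection.
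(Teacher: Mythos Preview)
Your proof is correct and follows essentially the same approach as the paper's: both track Algorithm~\ref{alg:super-Y} iteration by iteration and identify each pass with the corresponding row of the row-interval filling. Your version is simply more explicit, spelling out the invariant (that $v_i = w_i$ with an increasing suffix) which the paper leaves implicit in its one-paragraph sketch.
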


\begin{proof}
  Following the procedure for computing $\pi$ in Algorithm~\ref{alg:super-Y}, the last descent of $w$ corresponds to the highest occupied row of $\D(w)$, and the number of positions the letter at that position must move to the right is precisely the number of entries in that row. Thus removing the final descent corresponds to removing the highest occupied row, and the same values are recorded for both processes.
\end{proof}

While this construction applies equally well to any diagram, for a Rothe diagram the columns will be integer intervals as well. In fact, this property uniquely characterizes diagrams as Rothe diagrams.

\begin{proposition}
  A cell diagram $D$ in the first quadrant is the Rothe diagram of a permutation if and only if the columns of the row-interval filling form increasing intervals from bottom to top, beginning with $i$ at the bottom of column $i$.
  \label{prop:rothe}
\end{proposition}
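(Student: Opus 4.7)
The plan is to prove the two implications separately, handling the forward direction by a direct calculation and the backward direction by an inductive construction of the permutation from the diagram.

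For $(\Rightarrow)$, suppose $D = \D(w)$ and fix a column $c$. Enumerate the rows containing cells in column $c$ as $r_1 < \cdots < r_m$; by definition of the Rothe diagram these are precisely the positions $r$ with $r < w^{-1}(c)$ and $w_r > c$. The row-interval value at $(r_i, c)$ is $f(r_i, c) = r_i + N_i$ where $N_i$ counts the cells of row $r_i$ strictly left of column $c$. Any such cell's column $c'$ satisfies $c' < c < w_{r_i}$ and $c'$ must appear among $\{w_{r_i+1}, \ldots, w_n\}$, so $N_i = (c-1) - |\{w_1, \ldots, w_{r_i}\} \cap \{1, \ldots, c-1\}|$. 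The key identity is that exactly $i$ of $w_1, \ldots, w_{r_i}$ exceed $c$, namely $w_{r_1}, \ldots, w_{r_i}$, while $c$ itself has not yet appeared; this makes the intersection have size $r_i - i$. Hence $f(r_i, c) = c + i - 1$, so column $c$'s values from bottom to top form the interval $c, c+1, \ldots, c+m-1$.

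For $(\Leftarrow)$, I inductively construct $w$ from $D$: given $w_1, \ldots, w_{r-1}$, list row $r$'s cells as $c_1^r < \cdots < c_{k_r}^r$ and define $w_r$ to be the $(k_r+1)$-th smallest positive integer not in $\{w_1, \ldots, w_{r-1}\}$. The consistency claim, which I prove by induction on $r$, is that $\{c_1^r, \ldots, c_{k_r}^r\}$ equals the set of $k_r$ smallest such unused values; once established, $\D(w) = D$ is immediate since the cells of row $r$ of $\D(w)$ are by definition $\{c < w_r : c \notin \{w_1, \ldots, w_{r-1}\}\}$. The base case $r = 1$ is direct: each cell $(1, c_j^1)$ is the bottom of its column, so $f(1, c_j^1) = c_j^1$ by the column-interval property, while separately $f(1, c_j^1) = j$, yielding $c_j^1 = j$. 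For the inductive step, a cell $(r, c) \in D$ in position $j$ of row $r$ and height $i$ in column $c$ satisfies $r + j - 1 = f(r, c) = c + i - 1$, so $c = r + j - i$. The inductive hypothesis identifies $i - 1$ with the number of $r' < r$ for which $c < w_{r'}$ and $c \notin \{w_1, \ldots, w_{r'-1}\}$, and a direct count of used values in $\{1, \ldots, c\}$ then forces $c$ to be the $j$-th smallest integer outside $\{w_1, \ldots, w_{r-1}\}$.

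The subtlest point, and what I expect to be the main obstacle, is excluding the degenerate case where $c$ is actually already used at time $r$, i.e.\ $c = w_{r^*}$ for some $r^* < r$. In that scenario the cells of column $c$ at rows below $r$ must be confined to rows strictly below $r^*$ by the inductive hypothesis, and combining the column-interval identity at $(r, c)$ with the recursive description of $w_{r^*}$ as the $(k_{r^*}+1)$-th smallest unused value at time $r^*$ forces the relation $k_{r^*} = r - r^* + j - 1$; this relation in turn must be shown to conflict with the column-interval structure already imposed on row $r^*$, closing the induction and completing the proof.
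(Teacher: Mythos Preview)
Your forward direction is correct and more explicit than the paper's. The paper argues in one line via the duality $\D(w^{-1}) = \D(w)^T$: transposing the row-interval filling of $\D(w)$ yields the row-interval filling of $\D(w^{-1})$, so columns of the former are rows of the latter and hence intervals beginning at the column index. Your identity $f(r_i,c) = c+i-1$ is precisely the verification underlying that claim.

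The backward direction, however, cannot be completed: the converse is false as stated, and the paper's proof in fact addresses only the forward implication. Take $D = \{(1,1),(2,2)\}$. The row-interval filling assigns $1$ to $(1,1)$ and $2$ to $(2,2)$; column $1$ is the singleton interval $\{1\}$ beginning at $1$ and column $2$ is $\{2\}$ beginning at $2$, so the column hypothesis holds. But $D$ is not a Rothe diagram: if $(1,1),(2,2)\in\D(w)$ then $w_1>1$ and $w^{-1}(2)>2$, forcing $w_1\neq 2$ and hence $w_1\geq 3$; then $w_1>2$ and $w^{-1}(2)>1$ give $(1,2)\in\D(w)$, which is absent from $D$.

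You correctly isolated the degenerate case $c = w_{r^*}$ as the crux, and the relation $k_{r^*} = r - r^* + j - 1$ you derive is accurate. In the counterexample your construction gives $w_1 = 2$, so at $r=2$ we are in exactly this case with $r^*=1$, $j=1$, $i=1$, $c=2$; the relation reads $k_1 = 1$, which holds without contradiction. So the obstacle you flagged is not a subtlety to be overcome but a genuine failure of the statement.
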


\begin{proof}
  From \eqref{e:rothe}, one sees that the Rothe diagram for $w^{-1}$ is the transpose of the Rothe diagram for $w$. Moreover, transposing the row-interval filling for $w$ results in the row-interval filling for $w^{-1}$, so the columns must form intervals as well. 
\end{proof}

Stanley \cite{Sta84} introduced a new family of symmetric functions indexed by permutations in order to enumerate reduced words. Edelman and Greene \cite{EG87} introduced balanced labelings of Rothe diagrams in order to prove Stanley's conjecture that his symmetric functions are Schur positive and to give a precise enumeration of reduced words. We review balanced tableaux here, but give independent, elementary proofs of their bijection with reduced words using the ranked poset structure. 

\begin{definition}[\cite{EG87}]
  A \newword{standard balanced tableau} is a bijective filling of a Rothe diagram with entries from $\{1,2,\ldots,n\}$ such that for every entry of the diagram, the number of entries to its right that are greater is equal to the number of entries above it that are smaller. 
  \label{def:balanced-tableau}
\end{definition}

Denote the set of standard balanced tableaux on $\D(w)$ by $\Bal(w)$. 

\begin{example}[Balanced tableaux]
  For $w = 42153$, the filling of $\D(w)$ on the left of Fig.~\ref{fig:hooks} is balanced since for each cell (indicated in bold), the cells above and to the right have the same number of entries above that are greater (indicated in circles) as entries to the right that are smaller (also indicated in circles).

\begin{figure}[ht]
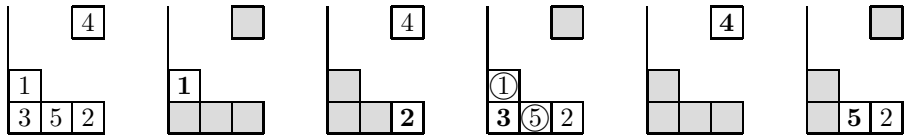
  
  \begin{displaymath}
    \begin{array}{c@{\hskip 2\cellsize}c@{\hskip 2\cellsize}c@{\hskip 2\cellsize}c@{\hskip 2\cellsize}c@{\hskip 2\cellsize}c}
      \vline\tableau{ & & 4 \\ \\ 1 \\ 3 & 5 & 2} &
      \vline\tableau{ & & \gb \\ \\ \mathbf{1} \\ \gb & \gb & \gb} &
      \vline\tableau{ & & 4 \\ \\ \gb \\ \gb & \gb & \mathbf{2}} &
      \vline\tableau{ & & \gb \\ \\ \cir{1} \\ \mathbf{3} & \cir{5} & 2} &
      \vline\tableau{ & & \mathbf{4} \\ \\ \gb \\ \gb & \gb & \gb} &
      \vline\tableau{ & & \gb \\ \\ \gb \\ \gb & \mathbf{5} & 2}
    \end{array}
  \end{displaymath}
  \caption{\label{fig:hooks}Checking the balanced condition for a standard tableaux.}
\end{figure}
  
  The $11$ balanced tableaux in $\Bal(42153)$ are shown in Fig.~\ref{fig:balanced}.
\end{example}

\begin{figure}[ht]
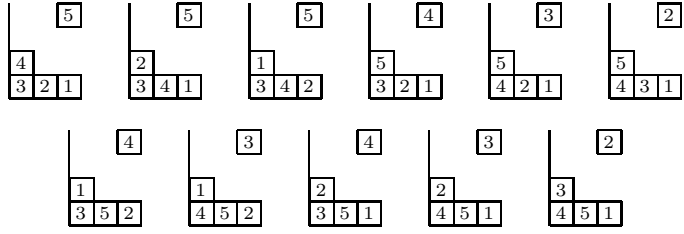

  \begin{displaymath}
    \begin{array}{c}
      \vline\smtab{ & & 5 \\ \\ 4 \\ 3 & 2 & 1 \\} \hspace{1.5\cellsize} 
      \vline\smtab{ & & 5 \\ \\ 2 \\ 3 & 4 & 1 \\} \hspace{1.5\cellsize}
      \vline\smtab{ & & 5 \\ \\ 1 \\ 3 & 4 & 2 \\} \hspace{1.5\cellsize}
      \vline\smtab{ & & 4 \\ \\ 5 \\ 3 & 2 & 1 \\} \hspace{1.5\cellsize}
      \vline\smtab{ & & 3 \\ \\ 5 \\ 4 & 2 & 1 \\} \hspace{1.5\cellsize}
      \vline\smtab{ & & 2 \\ \\ 5 \\ 4 & 3 & 1 \\} \\ \\
      \vline\smtab{ & & 4 \\ \\ 1 \\ 3 & 5 & 2 \\} \hspace{1.5\cellsize}
      \vline\smtab{ & & 3 \\ \\ 1 \\ 4 & 5 & 2 \\} \hspace{1.5\cellsize}
      \vline\smtab{ & & 4 \\ \\ 2 \\ 3 & 5 & 1 \\} \hspace{1.5\cellsize}
      \vline\smtab{ & & 3 \\ \\ 2 \\ 4 & 5 & 1 \\} \hspace{1.5\cellsize}
      \vline\smtab{ & & 2 \\ \\ 3 \\ 4 & 5 & 1 \\} 
    \end{array}
  \end{displaymath}
  \caption{\label{fig:balanced}The standard balanced tableaux for $42153$.}
\end{figure}

To prove standard balanced tableaux are in bijection with reduced words, first observe there is a canonical \emph{super-Yamanouchi} standard balanced tableau.

\begin{definition}
  A standard balanced tableau $R$ is \newword{super-Yamanouchi} if its reverse row reading word (right to left from bottom to top) is the identity.
  \label{def:bal-super}
\end{definition}

The balanced condition is immediate for the super-Yamanouchi tableau since entries increase in columns from bottom to top and in rows from left to right. For example, the super-Yamanouchi balanced tableau for $41758236$ is shown in Fig.~\ref{fig:super-B}.

\begin{figure}[ht]
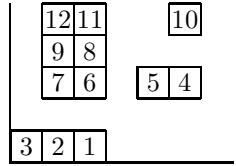

  \begin{displaymath}
    \vline\tableau{ & 12 & 11 & & & 10 \\ & 9 & 8 \\ & 7 & 6 & & 5 & 4 & \\ \\ 3 & 2 & 1 \\\hline }
  \end{displaymath}
  \caption{\label{fig:super-B}The super-Yamanouchi balanced tableau for $\D(w)$.}
\end{figure}

We next define simple analogs of the Coxeter moves for balanced tableaux, where the commutations involve two consecutive values and the Yang--Baxter moves involve three consecutive values. Both act only in certain circumstances.

\begin{definition}
  Given $w$ and $1 \leq i < \inv(w)$, $\swap_i$ acts on $\Bal(w)$ by exchanging $i$ and $i+1$ if they are not in the same row or column and by the identity otherwise.
  \label{def:bal-swap}
\end{definition}

\begin{definition}
  Given $w$ and $1 < i < \inv(w)$, $\braid_i$ acts on $\Bal(w)$ by exchanging $i-1$ and $i+1$ if one is in the same column and above $i$ and the other is in the same row and right of $i$ and by the identity otherwise.
  \label{def:bal-braid}
\end{definition}

\begin{figure}[ht]
  \begin{center}
    \begin{tikzpicture}[xscale=2,yscale=1.5,
        label/.style={%
          postaction={ decorate,%transform shape,
            decoration={ markings, mark=at position 0.75 with \node #1;}}}]
      \node at (3,5) (C5) {$\vline\smtab{& & 5 \\ \\ 4 \\ 3 & 2 & 1}$};
      \node at (2,4) (B4) {$\vline\smtab{& & 4 \\ \\ 5 \\ 3 & 2 & 1}$};
      \node at (4,4) (D4) {$\vline\smtab{& & 5 \\ \\ 2 \\ 3 & 4 & 1}$};
      \node at (1,3) (A3) {$\vline\smtab{& & 3 \\ \\ 5 \\ 4 & 2 & 1}$};
      \node at (3,3) (C3) {$\vline\smtab{& & 4 \\ \\ 2 \\ 3 & 5 & 1}$};
      \node at (5,3) (E3) {$\vline\smtab{& & 5 \\ \\ 1 \\ 3 & 4 & 2}$};
      \node at (0,2) (A2) {$\vline\smtab{& & 2 \\ \\ 5 \\ 4 & 3 & 1}$};
      \node at (2,2) (C2) {$\vline\smtab{& & 3 \\ \\ 2 \\ 4 & 5 & 1}$};
      \node at (4,2) (E2) {$\vline\smtab{& & 4 \\ \\ 1 \\ 3 & 5 & 2}$};
      \node at (1,1) (B1) {$\vline\smtab{& & 2 \\ \\ 3 \\ 4 & 5 & 1}$};
      \node at (3,1) (D1) {$\vline\smtab{& & 3 \\ \\ 1 \\ 4 & 5 & 2}$};
      \draw[thin,label={[above]{$\swap_4$}}] (C5) -- (B4) ;
      \draw[thin,label={[above]{$\braid_3$}}](C5) -- (D4) ;
      \draw[thin,label={[above]{$\swap_3$}}] (B4) -- (A3) ;
      \draw[thin,label={[above]{$\swap_4$}}] (D4) -- (C3) ;
      \draw[thin,label={[above]{$\swap_1$}}] (D4) -- (E3) ;
      \draw[thin,label={[above]{$\swap_2$}}] (A3) -- (A2) ;
      \draw[thin,label={[above]{$\swap_3$}}] (C3) -- (C2) ;
      \draw[thin,label={[above]{$\swap_1$}}] (C3) -- (E2) ;
      \draw[thin,label={[above]{$\swap_4$}}] (E3) -- (E2) ;
      \draw[thin,label={[above]{$\braid_4$}}](A2) -- (B1) ;
      \draw[thin,label={[above]{$\swap_2$}}] (C2) -- (B1) ;
      \draw[thin,label={[above]{$\swap_1$}}] (C2) -- (D1) ;
      \draw[thin,label={[above]{$\swap_3$}}] (E2) -- (D1) ;
    \end{tikzpicture}
  \caption{\label{fig:bal-relations}An illustration of the Coxeter moves on $\Bal(42153)$.}
  \end{center}
\end{figure}

For examples of Coxeter moves on balanced tableaux, see Fig.~\ref{fig:bal-relations}. Comparing this with Fig.~\ref{fig:rex-relations} suggests a poset-preserving bijection between reduced words and balanced tableaux, and indeed we will demonstrate this bijection below.

\begin{lemma}
  The maps $\swap_i$ and $\braid_i$ are well-defined involutions on $\Bal(w)$.
  \label{lem:bal-down}
\end{lemma}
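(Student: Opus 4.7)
The plan is to verify, for each of $\swap_i$ and $\braid_i$ separately, two things: that the output is again a standard balanced tableau, and that applying the move twice returns the input. The involution property will follow easily once well-definedness is established, since in each nontrivial case the move is literally a swap of entries, and the geometric configuration triggering the move (a pair of cells for $\swap_i$, the hook $\{C,U,R\}$ for $\braid_i$) is preserved by that swap, so the condition still holds with the values reversed.

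For $\swap_i$, in the nontrivial case the cells $A,B$ holding $i,i+1$ lie in distinct rows and distinct columns, so neither appears in the ``right of'' or ``above'' region of the other. Balance at $A$ and $B$ is then immediate: shifting the threshold between $i$ and $i+1$ only changes the count if an entry of value exactly $i$ or $i+1$ lies in the relevant region, and these are ruled out by the row/column disjointness. For any other cell $X$ with value $v\notin\{i,i+1\}$, the joint contribution of $A$ and $B$ to the ``greater right of $X$'' and ``smaller above $X$'' tallies is unchanged under the swap, since the two inequalities $i>v$ and $i+1>v$ have the same truth value whenever $v\neq i$, and similarly for the ``smaller'' side.

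For $\braid_i$, fix the hook $\{C,U,R\}$ with $i$ at $C$, one of $i\pm1$ at a cell $U$ above $C$ in its column, and the other at a cell $R$ right of $C$ in its row. Note $U$ and $R$ lie in different rows and different columns, and neither lies above nor right of the other. I then check balance at each relevant cell. At $C$: both sides of the equation shift by exactly $1$, because swapping $i-1$ and $i+1$ at $U,R$ flips precisely one contribution each to the ``smaller above'' and ``greater right'' tallies of $C$. At $U$ and $R$: the value changes by $\pm2$, but each side of the balance equation is individually unchanged, since the only entries whose status would flip under the new thresholds are $i-1,i,i+1$, all of which sit at cells of the hook that are neither above nor right of the cell being examined. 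At any other cell $X$, its value lies outside $\{i-1,i,i+1\}$, so the contributions of $U$ and $R$ to either tally at $X$ sum to the same total before and after the swap.

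The main obstacle is the bookkeeping at the three hook cells for $\braid_i$, where both values and thresholds change simultaneously; the cancellations rely crucially on the fact that $U$ lies in the column of $C$ but not of $R$, and $R$ lies in the row of $C$ but not of $U$, so the entries $i-1,i,i+1$ are geometrically isolated from each other's arm/leg windows. Once this is in hand, involutivity for both maps is automatic from the symmetry of the triggering condition under the swap of values.
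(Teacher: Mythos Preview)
Your proof is correct and follows essentially the same approach as the paper's own argument: both verify the balanced condition cell by cell, using that entries outside $\{i,i+1\}$ (resp.\ $\{i-1,i,i+1\}$) compare identically with the swapped values, that the swapped cells lie in distinct rows and columns from one another, and that at the pivot cell $C$ in the braid case both tallies shift by the same amount. Your write-up is more explicit than the paper's (which compresses the checks at $U$ and $R$ into a single clause), but the underlying reasoning is identical.
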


\begin{proof}
  For $R \in \Bal(w)$, if $i$ and $i+1$ are not in the same row or same column, then interchanging them cannot unbalance the tableau since all other entries compare the same with $i$ and with $i+1$. Thus $\swap_i(R) \in \Bal(w)$. If $i\pm 1$ is in the same row as $i$ and $i \mp 1$ is in the same column, then swapping them maintains the balance since, again, every $j$ less than $i-1$ or greater than $i+1$ compares with same with both, the two cannot be in the same row or same column as one another, and $i$ has traded the two to maintain its balance.
\end{proof}

\begin{remark}
  When $w$ is a permutation with a unique descent, $\D(w)$ has the form of the Young diagram (in English notation) for a partition, and standard balanced tableaux for $w$ are precisely the standard reverse Young tableau. In this case, the poset structure on $\Bal(w)$ where we consider only the \emph{Coxeter--Knuth relations} coincides with the dual equivalence graph \cite{Ass15} on standard reverse Young tableaux. For details on this connection and its combinatorial consequences, see \cite{Ass-W}.
\end{remark}

Parallel to the case of reduced words, we introduce a simple statistic on standard balanced tableaux that gives the minimum distance from a standard balanced tableau to the super-Yamanouchi one.

\begin{definition}
  For $R \in \Bal(w)$, the \newword{inversion number of $R$} is
  \begin{equation}
    \inv(R) = \#\{ (i<j) \mid i \mbox{ lies in strictly higher row, different column than } j \} .
    \label{e:bal-inv1}
  \end{equation}
  We call such a pair an \newword{inversion} of $R$.
  \label{def:bal-inv}
\end{definition}

\begin{example}[Inversion number of balanced tableaux]
  The standard balanced tableau in Fig.~\ref{fig:bal-inv} has $11$ inversion pairs as listed to the right. Notice that $(6,9)$ and $(4,8)$ are \emph{not} inversions since these pairs occur in the same column.
  \label{ex:inversion-B}
\end{example}

\begin{figure}[ht]
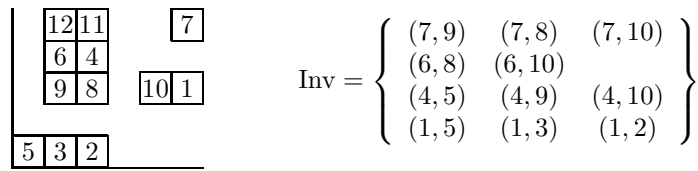

  \begin{displaymath}
    \vline\tableau{ & 12 & 11 & & & 7 \\ & 6 & 4 \\ & 9 & 8 & & 10 & 1 \\ \\ 5 & 3 & 2 \\\hline }
    \hspace{3\cellsize}
    \raisebox{-1.5\cellsize}{$\mathrm{Inv} = 
    \left\{ \begin{array}{ccc}
      (7,9) & (7,8) & (7,10) \\
      (6,8) & (6,10) & \\
      (4,5) & (4,9) & (4,10) \\
      (1,5) & (1,3) & (1,2) 
    \end{array} \right\}$}
  \end{displaymath}
  \caption{\label{fig:bal-inv}The inversion pairs for a standard balanced tableau.}
\end{figure}

\begin{theorem}
  Let $P_w \in \Bal(w)$ be the unique super-Yamanouchi tableau. Then for any $R \in \Bal(w)$, there exists a sequence $f = f_{\inv(R)} \cdots f_1$ of Coxeter moves such that $f(P_w) = R$, and, for any sequence $g = g_m \cdots g_1$ of Coxeter moves with $g(P_w)=R$, we have $m \geq \inv(R)$.
  \label{thm:bal-super}
\end{theorem}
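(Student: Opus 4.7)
The proof parallels that of Theorem \ref{thm:rex-super}, in three main steps. First, I show that each non-trivial $\swap_i$ or $\braid_i$ changes $\inv$ by exactly $\pm 1$. For $\swap_i$ non-trivial, the labels $i$ and $i+1$ sit in different rows and different columns, so the pair $(i,i+1)$ flips inversion status while for each $k\notin\{i,i+1\}$ the pairs $(k,i)$ and $(k,i+1)$ simply exchange inversion statuses as $i$ and $i+1$ exchange positions. For $\braid_i$ in the configuration with $i-1$ above $i$ in its column and $i+1$ right of $i$ in its row, the pair $(i-1,i+1)$ is an inversion (since $i-1$ is strictly higher than $i+1$ in a different column) that the braid removes, while external pairs $(k,i-1)$ and $(k,i+1)$ again exchange statuses; the mirror configuration is symmetric and contributes instead a gain in $\inv$.

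Second, I show $\inv(R)=0$ if and only if $R=P_w$. The forward direction is immediate: in $P_w$ the labels follow the reverse row reading order, so for any $a<b$, label $a$ lies in a weakly lower row (strictly lower when they are in different columns), precluding inversions. For the converse, I argue by induction on the label. The balanced condition forces label $1$ to be rightmost in its row, and $\inv(R)=0$ combined with the column-interval structure of $\D(w)$ from Prop \ref{prop:rothe} forces that row to be the bottom non-empty row, matching the $P_w$ position of label $1$. After labels $1,\ldots,k-1$ are placed at their $P_w$ positions, the same reasoning applied to the remaining cells places label $k$ at its $P_w$ position.

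Third, the key combinatorial step: for $R$ with $\inv(R)>0$, some Coxeter move reduces $\inv$ by $1$. If some consecutive pair $(i,i+1)$ is an inversion, then $\swap_i$ applies non-trivially and reduces $\inv$. Otherwise, pick an inversion $(a,b)$ with $b-a$ minimum; by assumption $b>a+1$, and by minimality every label $c$ with $a<c<b$ must lie either strictly below $a$ in the column of $a$ or strictly above $b$ in the column of $b$. Combining this with the Rothe diagram's column-interval structure and the balanced condition at suitable cells, I locate three consecutive labels $i-1,i,i+1$ arranged in the reducing braid configuration, whose braid removes an inversion. This is the main obstacle and requires a careful case analysis.

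Given these three ingredients, the theorem follows by induction on $\inv(R)$: iterating step three produces a sequence of $\inv(R)$ Coxeter moves from $R$ to $P_w$, which reversed gives the desired length-$\inv(R)$ path $P_w \to R$, while the matching lower bound is provided by step one.
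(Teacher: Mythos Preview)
Your three-step scaffold matches the paper's proof exactly, and steps 1 and 2 are essentially the paper's arguments (the paper dispatches the converse in step 2 more quickly: if $R\neq P_w$ then some $i<j$ has $i$ strictly above $j$; if they share a column, the balanced condition at the cell $j$ forces some $k>j$ to its right, and $(i,k)$ is then an inversion).

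The genuine gap is step 3. Your dichotomy is correct, and your observation that every intermediate $c$ must lie in the column of $a$ below $a$ or in the column of $b$ above $b$ is valid. But you explicitly defer the braid-finding to an unwritten ``careful case analysis,'' and that is precisely the crux of the theorem. The paper avoids your minimum-gap route entirely with a cleaner split: rather than searching for a consecutive pair that is an \emph{inversion} (strictly higher row \emph{and} different column), look for a consecutive pair $(i,i+1)$ with $i$ merely in a strictly higher row than $i+1$, column unrestricted. Such a pair exists whenever $\inv(R)>0$ by the same squeeze argument you use. If $i$ and $i+1$ lie in different columns, $\swap_i$ reduces $\inv$. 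Otherwise \emph{every} such pair has $i$ and $i+1$ in the same column; take $i$ maximal among them. Maximality forces $k+1$ weakly above $k$ for all $k>i$, so if $i+2$ were strictly above $i+1$ then every label exceeding $i+1$ would sit strictly above the row of $i+1$, contradicting the balanced condition at the cell of $i+1$ (which, having $i$ above it, demands a larger entry to its right). Hence $i+2$ lies in the row of $i+1$ to its right, and $\braid_{i+1}$ reduces $\inv$. This short argument replaces the case analysis you left open.
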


\begin{proof}
  We proceed by induction on $\inv(R)$. Clearly $\inv(P_w)=0$ since it is the unique balanced filling such that all larger entries occur weakly above smaller entries, and the result holds for this case. Moreover, if $R$ has some $i<j$ with $i$ above $j$ and in the same column, then the balanced condition ensures that there is some $k>j$ in the same row as $j$, and so $i<k$ with $i$ and $k$ not in the same column. In particular, $\inv(R) > 0$ for $R \neq P_w$. This establishes the base case.
  
  Let $R \in \Bal(w)$ with $\inv(R)>0$. We claim that there is a pair $(i,i+1)$ with $i$ above $i+1$. If not, then for any pair $(i<j)$ with $i$ above $j$ (such a pair exists since $\inv(R)>0$), there exists $k$ with $i<k<j$ and neither $(i<k)$ nor $(k<j)$ has the smaller strictly above the larger. Thus $k$ is weakly above $i$ and weakly below $j$, an impossibility since $i$ is strictly above $j$. Therefore we may take $i$ such that $i+1$ lies in a strictly lower row. There are two cases to consider.

  If $i$ and $i+1$ are not in the same column, then $\swap_i$ acts non-trivially on $R$. Furthermore, $\inv(\swap_i(R)) = \inv(R)-1$ since the pair $(i,i+1)$ is removed from the set of inversions and all other pairs remain but with $i$ and $i+1$ interchanged. By induction, the result holds for $\swap_i(R)$, and so, too, for $R$. 

  If $i$ and $i+1$ are in the same column for every pair with $i$ above $i+1$, then take $i$ maximal among all such pairs. We claim that $i+2$ must lie in the same row and to the right of $i+1$. If not, then $i+2$ must lie strictly above $i+1$, and, by the choice of $i$, $k+1$ must lie weakly above $k$ for all $k>i+2$. However, this would mean no larger entry was in the row of $i+1$, contracting the balanced condition since $i$ is in the same column and above it. Therefore $i+2$ does lie in the same row as $i+1$, and so $\braid_{i+1}$ acts non-trivially on $R$ by interchanging $i$ and $i+2$. Furthermore, $\inv(\braid_{i+1}(R)) = \inv(R)-1$ since the pair $(i,i+2)$ is removed from the set of inversions and all other pairs remain but with $i$ and $i+2$ interchanged. By induction, the result holds for $\braid_{i+1}(R)$, and so it holds for $R$ as well.   
\end{proof}

Parallel to Corollary~\ref{cor:yang-baxter}, we can also refine our calculation of Coxeter distance to count only the number of Yang--Baxter moves by considering \emph{column inversions}.

\begin{corollary}
  For $R \in \Bal(w)$, and $f = f_{k} \cdots f_1$ any minimal length sequence of Coxeter moves, i.e. $f_j = \swap_i$ or $\braid_i$, such that $f(R)$ is super-Yamanouchi, the number of Coxeter moves that are Yang--Baxter moves is equal to the number of column inversions of $R$, i.e.
  \begin{equation}
    \#\{ j \mid f_j = \braid_i \mbox{ some } i\} = \#\{ (i<j) \mid i \mbox{ in higher row, same column as } j \} . \vspace{-\baselineskip}
  \end{equation}
  \label{cor:yang-baxter-bal}
\end{corollary}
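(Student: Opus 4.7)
The plan is to define and track a natural \emph{column-inversion statistic} on $\Bal(w)$ and show that (i) commutations $\swap_i$ preserve it, (ii) downward Yang--Baxter moves $\braid_k$ decrease it by exactly one, and (iii) the super-Yamanouchi tableau $P_w$ has column-inversion count zero. Combined with the fact that a minimum-length Coxeter sequence from $R$ to $P_w$ consists entirely of downward moves, this will yield the corollary.

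For step (i), when $\swap_i$ acts non-trivially the values $i$ and $i+1$ occupy distinct rows and distinct columns, so $\{i, i+1\}$ itself contributes no column inversion either before or after. For any other value $m$, the column-inversion status of a pair $\{m, \text{value at } P_i\}$ or $\{m, \text{value at } P_{i+1}\}$ depends only on whether $m$ lies in the same column as the relevant position and on the vertical order of those positions. Since $m \notin \{i, i+1\}$, we have $m < i$ or $m > i+1$, and in either case the ``smaller above larger in the same column'' condition is insensitive to swapping the label $i$ for $i+1$ at a fixed position. Hence the total column-inversion count is unchanged.

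For step (ii), consider the downward incarnation of $\braid_k$ identified in the proof of Theorem~\ref{thm:bal-super}: positions $P, Q, R$ hold values $k-1, k, k+1$, with $P$ directly above $Q$ in the same column and $R$ in the same row as $Q$ but in a different column, and the move exchanges $k-1$ and $k+1$ at $P$ and $R$. The pair $\{k-1, k\}$ is a column inversion before the move at positions $\{P, Q\}$, while afterwards the same column holds $k+1$ above $k$, contributing no column inversion. Every other column inversion is preserved: the pair $\{k-1, k+1\}$ is never a column inversion since $P$ and $R$ lie in different columns; pairs involving $Q$ are unchanged since its value remains $k$; and pairs involving a value $m \notin \{k-1, k, k+1\}$ with the values at $P$ or $R$ are handled by the label-swap argument from step (i), since $m$ lies consistently on one side of the interval $\{k-1, k, k+1\}$. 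Step (iii) is immediate from the description of $P_w$: its entries increase up every column, so no smaller entry lies above a larger one in the same column.

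To conclude, any minimal sequence $f = f_k \cdots f_1$ taking $R$ to $P_w$ has length $\inv(R)$, and since each move changes $\inv$ by $\pm 1$ while the cumulative change equals $-\inv(R)$, each $f_j$ must be a downward move. Tallying the contributions along $f$, the column-inversion count of $R$ drops by $0$ at each $\swap$-step and by exactly $1$ at each $\braid$-step, reaching $0$ at $P_w$; hence the number of $\braid$-steps equals the column-inversion count of $R$. The main obstacle is the bookkeeping in step (ii): three positions are reshuffled simultaneously, and one must verify that no secondary column inversion involving an outside entry changes. The key point that makes this work is that the three reshuffled values form a consecutive interval $\{k-1, k, k+1\}$, so the relative order of any outside value $m$ with the labels at $P$ and $R$ is the same before and after the move.
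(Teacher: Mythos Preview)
Your proof is correct and follows essentially the approach the paper implicitly intends: the corollary is stated without proof in the paper, as a direct parallel to Corollary~\ref{cor:yang-baxter}, relying on the move-by-move analysis in the proof of Theorem~\ref{thm:bal-super}. Your argument makes this explicit by tracking the column-inversion count along a minimal path, verifying that $\swap_i$ preserves it, that the downward $\braid_k$ (with $k-1$ above $k$) decreases it by one, and that it vanishes at $P_w$; together with the observation that minimality forces every step to decrease $\inv$ by one, this pins down the number of Yang--Baxter moves exactly as claimed.
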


Computing the \emph{permutation} of a balanced tableau is also far simpler.

\begin{definition}
  Given $R \in \Bal(w)$, define the \newword{permutation of $R$}, denoted by $\perm(R)$, by sorting the rows of $R$ to be decreasing (read left to right) and taking the reverse row reading word of the result.
  \label{def:bal-perm}
\end{definition}

\begin{example}[Permutation of balanced tableaux]
  Letting $R$ be the balanced tableau in Fig.~\ref{fig:bal-perm}, we have $\perm(R) = 2 \, 3 \, 5 \, 1 \, 8 \, 9 \, 1\!0 \, 4 \, 6 \, 7 \, 1\!1 \, 1\!2$.

  \begin{figure}[ht]
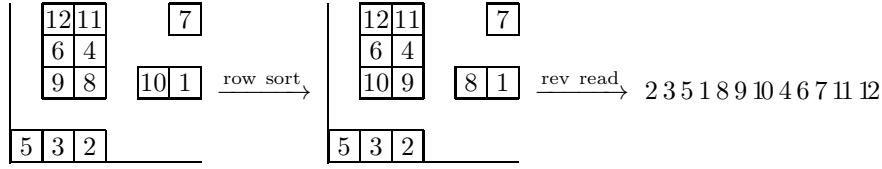

    \begin{displaymath}
      \vline\tableau{ & 12 & 11 & & & 7 \\ & 6 & 4 \\ & 9 & 8 & & 10 & 1 \\ \\ 5 & 3 & 2 \\\hline }
      \hspace{0.5\cellsize}\raisebox{-2\cellsize}{$\displaystyle\xrightarrow{\mathrm{row \ sort}}$}\hspace{0.5\cellsize}
      \vline\tableau{ & 12 & 11 & & & 7 \\ & 6 & 4 \\ & 10 & 9 & & 8 & 1 \\ \\ 5 & 3 & 2 \\\hline }
      \hspace{0.5\cellsize}\raisebox{-2\cellsize}{$\displaystyle\xrightarrow{\mathrm{rev \ read}}$}\hspace{0.5\cellsize}
      \raisebox{-2\cellsize}{$2 \, 3 \, 5 \, 1 \, 8 \, 9 \, 1\!0 \, 4 \, 6 \, 7 \, 1\!1 \, 1\!2$}
    \end{displaymath}
    \caption{\label{fig:bal-perm}Constructing the permutation of a standard balanced tableau.}
  \end{figure}

  Note that while $R$ has $11$ inversions, its associated permutation has length $13$. The difference is precisely the number of steps needed to sort the rows of the tableau. Moreover, letting $P$ be the super-Yamanouchi filling, we have
    \[ R = \swap_7 \, \swap_8 \, \swap_9 \, \swap_4 \, \swap_6 \, \braid_8 \, \braid_6 \, \swap_7 \, \swap_1 \, \swap_2 \, \swap_3 \, P, \]
  which is a sequence of $11$ involutions, exactly $2$ of which are Yang--Baxter moves.
\end{example}

\begin{theorem}
  For $R\in\Bal(w)$, we have
  \begin{equation}
    \inv(R) = \ell(\perm(R)) - \sum_r \mathrm{coinv}(\row_r(R)), \vspace{-0.5\baselineskip}
    \label{e:bal-inv2}
  \end{equation}
  where $\mathrm{coinv}(\row_r(R))$ is the number of entries $i<j$ with $i$ left of $j$ in row $r$.
  \label{thm:bal-inv}
\end{theorem}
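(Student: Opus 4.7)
The plan is to interpret both sides of \eqref{e:bal-inv2} as explicit counts of certain pairs $(i<j)$ in $R$, and then apply the balanced condition cell-by-cell to obtain the identity. There are three main steps.

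First, I would analyze $\ell(\perm(R))$. Since $\perm(R)$ is obtained by sorting each row to be decreasing and then reading right-to-left, bottom-to-top, I would check what happens to a pair $(i<j)$ in each of three cases: same row (after sorting, $j$ sits left of $i$, so the right-to-left reading places $i$ before $j$ in the word); $i$ in a strictly higher row than $j$ (the bottom-to-top order reads $j$ first, creating an inversion of $\perm(R)$); and $i$ in a strictly lower row (no inversion). This gives
\[
\ell(\perm(R)) = \#\{(i<j) \mid i \text{ lies in strictly higher row than } j\}.
\]

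Second, I would compare this with Definition~\ref{def:bal-inv}: $\inv(R)$ counts the same pairs with the additional restriction that the two lie in \emph{different} columns. Subtracting yields
\[
\ell(\perm(R)) - \inv(R) = \#\{(i<j) \mid i \text{ lies in strictly higher row, same column as } j\},
\]
i.e.\ the number of \emph{column inversions} of $R$ appearing in Corollary~\ref{cor:yang-baxter-bal}.

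Third—this is the key step—I would rewrite $\sum_r \mathrm{coinv}(\row_r(R))$ as a sum of cell-local quantities, namely $\sum_{x \in R} \#\{z > x \mid z \text{ in the same row and strictly right of } x\}$. The balanced condition of Definition~\ref{def:balanced-tableau}, applied to the cell containing $x$, equates this with $\#\{y < x \mid y \text{ in the same column and strictly above } x\}$. Summing over all cells, the right-hand side totals exactly the column inversions of $R$. Combining with the second step proves \eqref{e:bal-inv2}.

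I expect the main (very modest) obstacle to be a careful verification of the three cases in the first step, since the reverse row reading convention inverts the natural order and it is easy to get a sign wrong; once that is pinned down, the rest is a clean double-count using the balanced condition.
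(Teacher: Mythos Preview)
Your argument is correct and takes a genuinely different route from the paper's proof. The paper proceeds dynamically: it sets $\mathrm{I}(R)$ equal to the right-hand side of \eqref{e:bal-inv2}, checks that $\mathrm{I}$ and $\inv$ agree on the super-Yamanouchi tableau, and then verifies that each Coxeter move $\swap_i$ or $\braid_i$ changes $\mathrm{I}$ and $\inv$ by the same $\pm 1$; the conclusion then rests on Theorem~\ref{thm:bal-super}, which guarantees that every balanced tableau is reachable from the super-Yamanouchi one. Your proof is static and entirely self-contained: you identify $\ell(\perm(R))$ with the set of all ``row-inverted'' pairs, subtract off the same-column pairs to get $\inv(R)$, and then use the balanced condition cell-by-cell to match those same-column pairs with $\sum_r \mathrm{coinv}(\row_r(R))$. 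The advantage of your approach is that it avoids the induction on Coxeter moves and hence does not depend on Theorem~\ref{thm:bal-super}; it also makes transparent \emph{why} the balanced condition is exactly what is needed, and as a byproduct yields directly the identity (implicit in Corollary~\ref{cor:yang-baxter-bal}) that the column inversions of $R$ equal $\sum_r \mathrm{coinv}(\row_r(R))$. The paper's approach, on the other hand, fits naturally into its broader program of tracking how all the statistics evolve under Coxeter moves, and reuses machinery already in place.
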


\begin{proof}
  Let $\mathrm{I}$ be defined by the right hand side of \eqref{e:bal-inv2}. Let $R\in \Bal(w)$ and suppose $\swap_i$ acts non-trivially on $R$. Then $i$ and $i+1$ lie in different rows and different columns in $R$, so $\mathrm{sort}(R)$ and $\mathrm{sort}(\swap_i R)$ differ exactly in that $i$ and $i+1$ have been exchanged, and so $\perm(\swap_i R) = s_{i} \perm(R)$. Further, since all letters other than $i,i+1$ compare the same with $i$ and $i+1$, $R$ and $\swap_i R$ have the same number of row (co)inversions. In particular, we have
  \[ \mathrm{I}(\swap_i R) = \ell(s_{i} \perm(R)) - \sum_r \mathrm{coinv}(\row_r(R)) = \mathrm{I}(R) \pm 1, \vspace{-0.5\baselineskip}\]
  and, moreover, $\mathrm{I}(\swap_i R) = \mathrm{I}(R)+1$ precisely when $i$ is left of $i+1$ in $v$.

  Next suppose that $\braid_i$ acts non-trivially on $R$, exchanging $i-1$ and $i+1$ when $i$ lies directly below the one and directly left of the other. The permutation exchanging $i-1$ and $i+1$ is given by $s_{i-1}s_is_{i-1} = s_i s_{i-1} s_i$, but since $i-1$ and $i+1$ compare differently with $i$, when the rows are sorted the one in the row of $i$ will flip to the other side of it. Therefore $\perm(\braid_i R) = s_{i-1} s_{i} \perm(R)$ if $i+1$ is above $i-1$, and $\perm(\braid_i R) = s_{i} s_{i-1} \perm(R)$ otherwise, and in the former case we have
  \[\mathrm{I}(\braid_i R) = \ell(s_{i-1} s_{i} \perm(R)) - \sum_r\left(\mathrm{coinv}(\row_r(R)) + 1\right) = \mathrm{I}(R) + 1,\vspace{-0.5\baselineskip} \]
  and, by the same computation, $\mathrm{I}(\braid_i R) = \mathrm{I}(R)-1$ in the latter case.
  
  By Theorem~\ref{thm:bal-super}, $\inv(R)=0$ if and only if $R$ is super-Yamanouchi, in which case $\perm(R)$ is the identity and $R$ has decreasing rows, thus giving $\mathrm{I}(R)=0$ as well. Conversely, if we consider $\hat{v}$ to be the permutation obtained by following Definition~\ref{def:bal-perm} without first sorting the rows of $R$, then we have $\ell(\hat{v}) = \ell(v) + \sum_r \mathrm{coinv}(\row_r(R))$. In particular, $\mathrm{I}(R)=0$ if and only if $v$ is the identity, in which case $R$ is super-Yamanouchi. Therefore $\inv(R) = \mathrm{I}(R)$ whenever either is $0$. By Theorem~\ref{thm:bal-super}, for any $R\in\Bal(w)$, we may write $R = f_{\inv(R)} \cdots f_1(P)$, where $P$ is super-Yamanouchi and each $f_i$ is a Coxeter move. The result for $R$ now follows from the analysis of Coxeter moves above.
\end{proof}

Comparing Theorem~\ref{thm:rex-super} with Theorem~\ref{thm:bal-super}, one can anticipate the bijection between reduced words and standard balanced tableaux preserves the permutation and inversion number. Indeed, given the permutation $v$, one can recover the row entries for the corresponding balanced tableau, if it exists. The following result shows there is at most one balanced tableau with the given row entries.

\begin{lemma}
  For $R,S \in \Bal(w)$, if $R$ and $S$ both row sort to $T$, then $R=S$.
\end{lemma}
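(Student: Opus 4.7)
The plan is to prove that the row content of a balanced tableau, together with the shape $\D(w)$, forces the tableau uniquely, by inducting from the topmost row of $\D(w)$ downward.

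The first step is to reformulate the balanced condition: at a cell $c$ with entry $v$, arm length $a$, and leg length $\ell$, it is equivalent to require that $v$ be the $(a+1)$-th smallest value in the hook $H(c)$ consisting of $c$ together with the $a$ cells to its right in its row and the $\ell$ cells above it in its column. Indeed if $x = \#\{\text{right} > v\} = \#\{\text{above} < v\}$, then $H(c)$ contains $(a-x) + x = a$ values strictly less than $v$, giving $v$ rank $a+1$. This reformulation is the essential tool, as it separates the ``size'' datum $a$ from the positional data carried by the other entries in the hook.

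Next, in the inductive argument, the topmost non-empty row of $\D(w)$ has empty leg at every cell, so each hook reduces to the row-rightward cells, and the balanced condition forces the row content to sit in strictly decreasing order from left to right---a unique placement. For the inductive step, assume all rows strictly above row $r$ are forced (and hence agree in $R$ and $S$). In row $r$ with cells $c_1 < \cdots < c_k$ and row content $V_r$, process left to right: at cell $c_i$, let $L_i$ denote the (known) multiset of values above $(r, c_i)$ in its column, and set $V^{(i)} = V_r \setminus \{\sigma(c_1), \ldots, \sigma(c_{i-1})\}$, where $\sigma$ denotes the filling. Since the cells of $H(c_i)$ are a fixed subset of $\D(w)$ and the row-$r$ content is preserved, the value-multiset of $H(c_i)$ is exactly $V^{(i)} \sqcup L_i$, independently of how the remaining row-$r$ entries are eventually ordered. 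The reformulated balanced condition then forces $\sigma(c_i)$ to be the $(k-i+1)$-th smallest element of $V^{(i)} \sqcup L_i$, which is a single well-defined integer.

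Thus $\sigma(c_i)$ is determined at each step without circularity, row $r$ is uniquely filled by induction on $i$, and iterating over $r$ from top to bottom shows that the entire tableau is forced by its row contents. Since $R$ and $S$ have identical row contents, $R = S$ follows. The main subtlety is justifying the ``local'' use of the balanced condition at $c_i$ before the later cells of the row have been placed; this is resolved by the observation that the multiset of values appearing in $H(c_i)$ is already pinned down by the conservation of row content, so the rank computation that identifies $\sigma(c_i)$ truly depends only on information already in hand.
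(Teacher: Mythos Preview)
Your proof is correct and follows essentially the same inductive scheme as the paper: top row to bottom, then left to right within each row, using the balanced condition together with the already-determined leg and the remaining row content to pin down each entry uniquely. Your rank-in-hook reformulation (the entry must be the $(a+1)$-th smallest in $H(c)$) is a clean repackaging of the paper's monotonicity argument (the sequences $c_i$ weakly decreasing and $r_i$ strictly increasing cross at most once), but the substance is the same.
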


\begin{proof}
  We will show there is at most one ordering on the rows of a filling $T$ such that $T$ is balanced. Beginning with the top row, we must place entries in decreasing order from left to right. Assuming all higher rows have been uniquely balanced, begin balancing row $r$ from left to right. If the available entries for cell $x$ are $x_1 > \cdots > x_k$, then let $c_i$ be the number of cells above $x$ that are smaller than $x_i$, and let $r_i = i-1$, which is the number of entries right of $x$ that will be greater than $x_i$ should it be placed into cell $x$. Note that $c_1 \geq \cdots \geq c_k$ and $r_1 < \cdots < r_k$. Thus there is at most one index $i$ for which $r_i=c_i$, i.e. there is at most one entry that can be placed into cell $x$ for which the resulting tableau will be balanced.
\end{proof}

We have now established the following isomorphism of posets.

\begin{theorem}
  We have a poset isomorphism $\varphi : \Red(w) \stackrel{\sim}{\rightarrow} \Bal(w)$ such that $\varphi(\rho) = R$ if and only if $\perm(\rho) = \perm(R)$. In particular, $\varphi$ preserves the rank.
\end{theorem}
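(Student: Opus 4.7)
I will construct $\varphi$ by requiring $\perm(\varphi(\rho)) = \perm(\rho)$. The preceding row-sort lemma shows such a balanced tableau is unique when it exists, since $\perm(R)$ determines the sorted entries of each row of $R$ (the entries of row $r$ are precisely the values of $\perm(R)$ at the positions of the reverse row reading order assigned to row $r$), and the row-sort lemma implies a balanced tableau is determined by its row multisets. The task thus reduces to existence: for every $\rho \in \Red(w)$ there is an $R \in \Bal(w)$ with matching permutation.

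I proceed by induction on $\inv(\rho)$. The base case $\rho = \pi$ is handled by $\varphi(\pi) = P_w$, both having the identity permutation. For $\inv(\rho) > 0$, Theorem~\ref{thm:rex-super} supplies a Coxeter move $f$ with $\inv(f\rho) = \inv(\rho) - 1$, and by induction $R' = \varphi(f\rho)$ is defined. I set $R := f(R')$, where $f$ is interpreted on $\Bal(w)$ via Definitions~\ref{def:bal-swap}--\ref{def:bal-braid}. The proofs of Theorems~\ref{thm:rex-super} and~\ref{thm:bal-inv} reveal that a non-trivial $\swap_i$ left-multiplies $\perm$ by $s_i$ in both settings, and a non-trivial $\braid_i$ left-multiplies by $s_{i-1}s_i$ or $s_is_{i-1}$ under the same combinatorial rule. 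Thus, provided $f$ acts non-trivially on $R'$, we obtain $\perm(R) = \perm(f \cdot f\rho) = \perm(\rho)$ as required, so $R = \varphi(\rho)$ by uniqueness.

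The main obstacle is establishing that $f$ acts non-trivially on $R'$: without it, $R = R'$ would carry permutation $\perm(f\rho) \neq \perm(\rho)$. I plan to handle this by strengthening the induction hypothesis to include the statement that for every Coxeter move $g$ and every $\rho'$ of smaller inversion number, $g$ is non-trivial on $\rho'$ if and only if $g$ is non-trivial on $\varphi(\rho')$. Applied with $\rho' = f\rho$ and $g = f$ (non-trivial on $f\rho$ by involution), this yields non-triviality of $f$ on $R'$. Propagating the equivalence up to level $\inv(\rho)$ is accomplished via a symmetric construction $\psi : \Bal(w) \to \Red(w)$ descending from $P_w$ along Coxeter moves using Theorem~\ref{thm:bal-super}; this $\psi$ serves as the two-sided inverse to $\varphi$ and simultaneously guarantees surjectivity.

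Since the construction is equivariant with respect to Coxeter moves --- covers in $\Red(w)$ correspond to covers in $\Bal(w)$ --- $\varphi$ preserves the rank function $\inv$ and is the desired poset isomorphism.
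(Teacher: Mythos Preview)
The paper itself offers no proof beyond the sentence ``We have now established the following isomorphism of posets'' after the row-sort lemma, so your sketch is already more detailed than what the paper provides, and you correctly isolate the non-triviality of Coxeter moves as the crux that the paper glosses over.

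The gap lies in closing the induction. Your strengthened hypothesis at level $n-1$ must cover \emph{upward} moves: you apply it to $\rho' = f\rho$ (at level $n-1$) with $g = f$, which sends $f\rho$ up to $\rho$ at level $n$. But verifying that instance at the previous inductive stage already requires comparing $f$ on $\varphi(f\rho)$ with $f$ on $f\rho$, and the image $f(\varphi(f\rho))$ lives at level $n$, where $\varphi$ is not yet defined. Invoking a symmetric $\psi$ does not dissolve the circularity: $\psi$ faces the identical upward-move problem at the same level, and running the two constructions in parallel supplies neither with the missing information. A secondary issue is your assertion that $\braid_i$ left-multiplies $\perm$ by $s_{i-1}s_i$ or $s_is_{i-1}$ ``under the same combinatorial rule'' on both sides; the two rules are phrased in incomparable terms (letter values $\rho_{i\pm 1}$ versus $\rho_i$ on one side, relative cell position of $i\pm 1$ to $i$ on the other), and matching them is part of what must be proved rather than assumed. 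What would actually close the argument is a direct check---independent of the bijection being built---that the non-triviality of each $\swap_i$ and $\braid_i$, together with the specific multiplier in the braid case, is determined by $\perm$ and the shape $\D(w)$ alone; once that is in hand, uniqueness on the $\Bal$ side (the row-sort lemma) finishes both existence and well-definedness without any bootstrap.
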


\begin{example}[Poset isomorphism $\Red(w) \stackrel{\sim}{\rightarrow} \Bal(w)$]
  The running examples in $\Red(w)$ and $\Bal(w)$ for the permutation $w = 41758236$ both have associated permutation $2 \, 3 \, 5 \, 1 \, 8 \, 9 \, 1\!0 \, 4 \, 6 \, 7 \, 1\!1 \, 1\!2$, and so correspond under the bijection. 
\end{example}

As a consequence, we recover the following result of Edelman and Greene \cite{EG87}, also proved bijectively by Fomin, Greene, Reiner, and Shimozono \cite{FGRS97}.

\begin{corollary}
  The number of reduced words for $w$ is equal to the number of standard balanced tableaux of shape $\D(w)$.
  \label{cor:rex-bal}
\end{corollary}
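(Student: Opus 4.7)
The corollary is an immediate consequence of the poset isomorphism $\varphi : \Red(w) \stackrel{\sim}{\rightarrow} \Bal(w)$ just established. The plan is simply to observe that any bijection between finite sets equates their cardinalities, so $|\Red(w)| = |\Bal(w)|$, which is exactly the claim.

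If I were asked to reprove the statement directly without that theorem in hand, my approach would be to construct a bijection between the two sets using their parallel ranked poset structures. First I would match the maximal elements by sending the super-Yamanouchi reduced word $\pi$ for $w$ to the super-Yamanouchi balanced tableau $P_w$. Next I would use the compatibility of the Coxeter moves $\swap_i$ and $\braid_i$ on the two sides (Definitions~\ref{def:rex-swap} and \ref{def:rex-braid} versus Definitions~\ref{def:bal-swap} and \ref{def:bal-braid}) to extend this matching along covering relations down the poset, invoking Theorems~\ref{thm:rex-super} and \ref{thm:bal-super} to guarantee that every element on each side is reachable from the top via such moves and that the ranks agree on the two sides.

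The main obstacle in that alternative approach would be well-definedness, since a given $\rho \in \Red(w)$ can be reached from $\pi$ by many distinct sequences of Coxeter moves, and the resulting image in $\Bal(w)$ must not depend on the chosen sequence. This is handled by the lemma immediately preceding the isomorphism theorem: a balanced tableau is uniquely determined by its row content, hence by $\perm$, so $\varphi(\rho)$ is intrinsically characterized by $\perm(\rho)$ rather than by any particular path chosen to descend to $\rho$. Once well-definedness is in hand, injectivity follows from the same lemma and surjectivity from the connectivity of the balanced-tableau graph supplied by Theorem~\ref{thm:bal-super}, completing the bijection and hence the enumeration.
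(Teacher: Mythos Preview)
Your proposal is correct and matches the paper's approach: the corollary is stated immediately after the poset isomorphism $\varphi : \Red(w) \stackrel{\sim}{\rightarrow} \Bal(w)$ and is presented simply as a consequence of that bijection, with no further argument given. Your additional sketch of how to rebuild the bijection from the poset structure is accurate but unnecessary here, since the isomorphism theorem is already in hand.
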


%%%%%%%%%%%%%%%%%%%%%%%%%%%%%%%%%%%%%%%%%%%%%%%%%%%%%%%%%%%%%%%%
%
\section{Involutions and the long permutation}
%
%%%%%%%%%%%%%%%%%%%%%%%%%%%%%%%%%%%%%%%%%%%%%%%%%%%%%%%%%%%%%%%%
\label{sec:long}

It is easy to see that if $\rho$ is a reduced word for $w$, then the reversal of $\rho$ is a reduced word for $w^{-1}$. We give the analogous involution on balanced tableaux.

\begin{definition}
  Define the \newword{flip map} $\varphi$ on standard balanced tableaux by setting $\varphi(R)$ to be the transpose of $R$ composed with replacing entry $i$ with $\ell-i+1$, where $\ell$ is the number of cells of $R$. 
\end{definition}

\begin{example}[Flip map]
  The flip map applied to $R \in \Bal(41758236)$ from Example~\ref{ex:inversion-B} results in $\varphi(R) \in \Bal(26714835)$ shown in Figure~\ref{fig:flip}. As $R$ corresponds to $\rho = (5,6,3,4,5,7,3,1,4,2,3,6)$ in Example~\ref{ex:inversion-R}, we may also consider the reversal of $\rho$ given by $\mathrm{rev}(\rho) = (6,3,2,4,1,3,7,5,4,3,6,5)$. We can easily compute
  \[ \perm(\varphi(R)) = 8 \, 1 \, 4 \, 7 \, 1\!0 \, 2 \, 5 \, 9 \, 1\!1 \, 3 \, 6 \, 1\!2 \]
  from Figure~\ref{fig:flip}, and less easily compute by Algorithm~\ref{alg:rex-perm} that this coincides with $\perm(\mathrm{rev}(\rho))$, indicating that $\varphi(R)$ corresponds to $\mathrm{rev}(\rho)$.
\end{example}

\begin{figure}[ht]
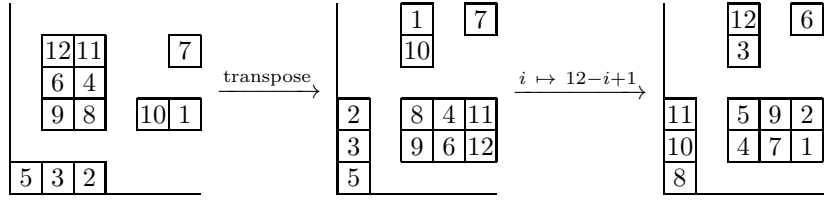

  \begin{displaymath}
    \vline\tableau{ \\ & 12 & 11 & & & 7 \\ & 6 & 4 \\ & 9 & 8 & & 10 & 1 \\ \\ 5 & 3 & 2 \\\hline }
    \hspace{0.5\cellsize}\raisebox{-2\cellsize}{$\displaystyle\xrightarrow{\mathrm{transpose}}$}\hspace{0.5\cellsize}
      \vline\tableau{ & & 1 & & 7 \\ & & 10 \\ \\ 2 & & 8 & 4 & 11 \\ 3 & & 9 & 6 & 12 \\ 5 \\\hline }
      \hspace{0.5\cellsize}\raisebox{-2\cellsize}{$\displaystyle\xrightarrow{i \ \mapsto \ 12-i+1}$}\hspace{0.5\cellsize}
      \vline\tableau{ & & 12 & & 6 \\ & & 3 \\ \\ 11 & & 5 & 9 & 2 \\ 10 & & 4 & 7 & 1 \\ 8 \\\hline }
  \end{displaymath}
  \caption{\label{fig:flip}The flip map applied to a standard balanced tableau.}
\end{figure}

\begin{proposition}
  The flip map $\varphi$ is a well-defined involution that maps $\Bal(w)$ to $\Bal(w^{-1})$ such that $\varphi(\swap_i(R)) = \swap_{\ell-i}(\varphi(R))$ and $\varphi(\braid_i(R)) = \braid_{\ell-i+1}(\varphi(R))$. 
  \label{prop:flip}
\end{proposition}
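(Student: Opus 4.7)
The plan is to verify the claim in three stages, tracking how the two operations composing $\varphi$ (transpose of the diagram and entry-reversal $i \mapsto \ell-i+1$) interact with the Rothe diagram, the balanced condition, and the moves $\swap_i, \braid_i$.

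First I would establish well-definedness, namely that $\varphi(R) \in \Bal(w^{-1})$. That $\D(w)^T = \D(w^{-1})$ is immediate from the definition \eqref{e:rothe} (and was already used in the proof of Proposition~\ref{prop:rothe}), so the underlying shape of $\varphi(R)$ is correct. For the balanced condition, consider a cell $x$ of $R$ with entry $a$. Under transposition, the set of cells strictly to the right of $x$ becomes the set of cells strictly above $x^T$, and cells strictly above $x$ become cells strictly to the right of $x^T$. Under the relabeling $i \mapsto \ell - i + 1$, the order reverses, so entries greater than $a$ become entries less than $\ell - a + 1$, and entries less than $a$ become entries greater than $\ell - a + 1$. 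Composing these, (\# entries right of $x$ greater than $a$) equals (\# entries above $x^T$ smaller than $\ell-a+1$), and (\# entries above $x$ smaller than $a$) equals (\# entries right of $x^T$ greater than $\ell-a+1$). Hence the balanced condition for the new entry $\ell-a+1$ in cell $x^T$ of $\varphi(R)$ holds exactly because the original condition held at $x$.

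Second, $\varphi$ is an involution because transposition squares to the identity, the relabeling $i \mapsto \ell-i+1$ squares to the identity, and the two operations act on disjoint data (geometry vs.\ entries) and so commute.

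Third, I would verify the intertwining identities cell by cell. Under $\varphi$, the entries $i$ and $i+1$ become the entries $\ell-i+1$ and $\ell-i$, which are consecutive with smaller index $\ell-i$; transposition exchanges rows and columns, so the geometric hypothesis ``not in the same row or column'' is self-dual. Thus the involution $\swap_i$ on $R$ corresponds exactly to $\swap_{\ell-i}$ on $\varphi(R)$. For braids, the three consecutive values $i-1, i, i+1$ become $\ell-i+2, \ell-i+1, \ell-i$, consecutive around the middle value $\ell-i+1$. The $\braid_i$ condition places $i$ at the corner of an L-shape with $i\pm 1$ above in the same column and $i\mp 1$ to the right in the same row; after transposing, the corner entry $\ell-i+1$ sits with one of $\ell-i, \ell-i+2$ above in the same column and the other to the right in the same row, which is exactly the hypothesis for $\braid_{\ell-i+1}$. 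The move itself exchanges the two arm endpoints in both pictures, completing the identification.

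The routine verifications are the case analyses in the third stage; I expect the main care point to be keeping the two different order-reversals (rows/columns and values) straight in the $\braid$ case, since one must confirm that the two legs of the L-shape are correctly permuted under the composition of transpose and entry-reversal. Once this bookkeeping is fixed, the proposition follows directly from the definitions of $\swap_i$ and $\braid_i$ on balanced tableaux in Definitions~\ref{def:bal-swap} and~\ref{def:bal-braid}.
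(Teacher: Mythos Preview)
Your proposal is correct and follows essentially the same approach as the paper: both arguments establish $\D(w)^T=\D(w^{-1})$, then track how transposition exchanges rows with columns and how the relabeling $i\mapsto\ell-i+1$ reverses order, composing these to recover the balanced condition and the hypotheses for $\swap$ and $\braid$. Your explicit remark that $\varphi$ is an involution (the two operations square to the identity and commute) is a small addition the paper leaves implicit, but otherwise the structure and content match.
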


\begin{proof}
  By \eqref{e:rothe}, the Rothe diagram for $w^{-1}$ is the transpose of the Rothe diagram for $w$, and so the flip map $\varphi$ is a well-defined from $\Bal(w)$ to $\Bal(w^{-1})$ if its image is balanced. A filling $R$ is balanced if and only if for each cell $y$ of $R$ we have
  \[ \#\{ x \in R \mid x < y \mbox{ and } x \mbox{ above } y\} = \#\{ z \in R \mid z > y \mbox{ and } z \mbox{ right of } y\}, \]
  where $x$ is in the same column and $z$ is in the same row as $y$. Transposing $R$ to $R^{T}$ results in a filling such that each cell $y$ satisfies
  \[ \#\{ x \in R^{T} \mid x < y \mbox{ and } x \mbox{ right of } y\} = \#\{ z \in R^{T} \mid z > y \mbox{ and } z \mbox{ above } y\}, \]
  where $x$ is now in the row of $y$ and $z$ is in the column of $y$. Replacing $i$ with $\ell-i+1$ reverses the relative order of entries, so that each cell $y$, we have
  \[ \#\{ x \in \varphi(R) \mid x > y \mbox{ and } x \mbox{ right of } y\} = \#\{ z \in \varphi(R) \mid z < y \mbox{ and } z \mbox{ above } y\}, \]
  where $x$ is in the row of $y$ and $z$ is in the column of $y$, i.e. $\varphi(R)$ is balanced. 

  Since $i$ and $i+1$ are not in the row or column in $R$ if and only if $\ell-i+1$ and $\ell-i$ are not in the row or column in $\varphi(R)$, we have $\varphi(\swap_i(R)) = \swap_{\ell-i}(\varphi(R))$. Similarly, $i-1,i,i+1$ form a braid pattern in $R$ if and only if $\ell-i+2,\ell-i+1,\ell-i$ form a braid pattern in $\varphi(R)$, showing $\varphi(\braid_i(R)) = \braid_{\ell-i+1}(\varphi(R))$.
\end{proof}

Using the ranked poset structure on reduced words and balanced tableaux and the observation that $\mathrm{rev}(\swap_i(\rho)) = \swap_{\ell-i}(\mathrm{rev}(\rho))$ and $\mathrm{rev}(\braid_i(\rho)) = \braid_{\ell-i+1}(\mathrm{rev}(\rho))$, we have the following equivalence of involutions.

\begin{corollary}
  Given a permutation $w$, if $R\in\Bal(w)$ corresponds to $\rho\in\Red(w)$, then $\varphi(R)\in\Bal(w^{-1})$ corresponds to $\mathrm{rev}(\rho)\in\Red(w^{-1})$.
\end{corollary}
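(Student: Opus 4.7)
The plan is to argue by connectedness of $\Red(w)$ under Coxeter moves, reducing the statement to a single base case via the intertwining relations already supplied. The key preliminary observation is that the bijection $\Phi:\Red(w)\stackrel{\sim}{\rightarrow}\Bal(w)$ of the preceding theorem (here denoted $\Phi$ to distinguish it from the flip map $\varphi$) commutes with Coxeter moves: $\Phi\circ\swap_i=\swap_i\circ\Phi$ and $\Phi\circ\braid_i=\braid_i\circ\Phi$. This follows from the characterization $\Phi(\rho)=R\iff\perm(\rho)=\perm(R)$ together with the computations in the proofs of Theorems~\ref{thm:rex-super} and~\ref{thm:bal-super}: a non-trivial $\swap_i$ multiplies $\perm$ on the left by $s_i$, while a non-trivial $\braid_i$ multiplies by $s_i s_{i-1}$ or $s_{i-1}s_i$, with the same effect on both sides. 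Since these left actions are distinct for distinct moves, a Coxeter move is determined by its effect on $\perm$, so $\Phi$ matches corresponding edges.

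Combining this with Proposition~\ref{prop:flip} and the immediate identities $\mathrm{rev}(\swap_i(\rho))=\swap_{\ell-i}(\mathrm{rev}(\rho))$ and $\mathrm{rev}(\braid_i(\rho))=\braid_{\ell-i+1}(\mathrm{rev}(\rho))$ (which hold because reversing a length-$\ell$ word sends positions $(i,i{+}1)$ to $(\ell{-}i,\ell{-}i{+}1)$ and $(i{-}1,i,i{+}1)$ to $(\ell{-}i,\ell{-}i{+}1,\ell{-}i{+}2)$), both compositions $\varphi\circ\Phi$ and $\Phi\circ\mathrm{rev}$ are bijections $\Red(w)\to\Bal(w^{-1})$ that convert $\swap_i$ on the domain into $\swap_{\ell-i}$ on the codomain, and $\braid_i$ into $\braid_{\ell-i+1}$, in exactly the same way. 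Since $\Red(w)$ is connected under Coxeter moves by Tits's theorem, it suffices to verify that the two maps agree on a single reduced word.

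I would verify this at the super-Yamanouchi reduced word $\pi_w$, where $\Phi(\pi_w)=P_w$, so that the required equality $\varphi(P_w)=\Phi(\mathrm{rev}(\pi_w))$ reduces via the characterization of $\Phi$ to $\perm(\varphi(P_w))=\perm(\mathrm{rev}(\pi_w))$. Propositions~\ref{prop:super-bal} and~\ref{prop:rothe} furnish explicit descriptions of $\pi_w$ and $P_w$ in terms of the row-interval filling of $\D(w)$, whose columns also form intervals; together with the transposition identity $\D(w^{-1})=\D(w)^T$ and the complementation $i\mapsto\ell-i+1$ used to define $\varphi$, both sides can be computed in parallel and shown to coincide. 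I expect this final verification to be the main obstacle, as it requires careful bookkeeping of how the row- and column-interval structures reorganize under transposition of the diagram and value-complementation, but no new structural input beyond the already-established interval characterization is needed.
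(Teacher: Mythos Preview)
Your approach is essentially the paper's own. The paper's entire justification for this corollary is the single sentence preceding it, which invokes exactly the ingredients you assemble: the ranked poset isomorphism together with the parallel intertwining identities $\mathrm{rev}\circ\swap_i=\swap_{\ell-i}\circ\mathrm{rev}$, $\mathrm{rev}\circ\braid_i=\braid_{\ell-i+1}\circ\mathrm{rev}$ (stated there) and $\varphi\circ\swap_i=\swap_{\ell-i}\circ\varphi$, $\varphi\circ\braid_i=\braid_{\ell-i+1}\circ\varphi$ (from Proposition~\ref{prop:flip}). You are simply more explicit than the paper in noting that connectedness alone only reduces the claim to a single anchor point, and in choosing the super-Yamanouchi pair $(\pi_w,P_w)$ as that anchor; the paper leaves this step tacit.
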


While these involutions respect the graph structure on reduced words and balanced tableaux, they do not behave particularly well with respect to the ranking. When $w$ is particularly nice, or rather, when the Rothe diagram of $w$ is particularly nice, there is a different involution that does respect the poset structure.

\begin{theorem}
  For the longest permutation $w_0^{(n)} = n (n-1) \cdots 2 1$ of $\mathfrak{S}_n$, the map $\psi$ sending an entry $i$ to $\binom{n}{2}-i+1$ is an order-reserving involution on $\Bal(w_0^{(n)})$. In particular, $\Bal(w_0^{(n)})$ has a unique minimal element $B$ with
  \[ \inv(B) = \frac{(n-2)(n-1)(n)(3n-5)}{24} . \]
  \label{thm:reverse}
\end{theorem}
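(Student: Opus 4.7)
The plan is to verify that $\psi$ maps $\Bal(w_0^{(n)})$ to itself, is an involution, and reverses the poset order, and then to compute the $\inv$-value of the image $B=\psi(P_{w_0^{(n)}})$. The structural fact that drives everything is that $\D(w_0^{(n)})$ is the staircase: the cell in column $c$ and row $r$ (from the bottom) lies in the diagram iff $r+c\le n$, and therefore has exactly $n-r-c$ cells strictly above it in its column and exactly $n-r-c$ cells to its right in its row. In particular, arm equals leg at every cell of $\D(w_0^{(n)})$, and this self-conjugate symmetry is what will make $\psi$ work.

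For the first step, I would fix a cell $y$ with arm $=$ leg $= a$ and set $k = \#\{x\text{ above } y \mid x<y\}$. The balanced condition for $R$ forces $\#\{z \text{ right of } y \mid z>y\} = k$, whence $\#\{x\text{ above} \mid x>y\}=a-k$ and $\#\{z\text{ right} \mid z<y\}=a-k$. Since $\psi(i)=\binom{n}{2}-i+1$ reverses the total order on entries, the entry $\psi(y)$ in $\psi(R)$ has $a-k$ smaller entries above and $a-k$ larger entries to its right, satisfying the balanced condition. Because $\psi$ is an involution on entries, it is an involution on $\Bal(w_0^{(n)})$.

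For order reversal, I would check that $\swap_i$ and $\braid_i$ act non-trivially on $R$ if and only if $\swap_{N-i}$ and $\braid_{N-i+1}$ act non-trivially on $\psi(R)$, with $N=\binom{n}{2}$, so that $\psi$ is a graph automorphism of the Coxeter graph. To pin down the direction, partition the pairs of cells of $\D(w_0^{(n)})$ lying in distinct rows and distinct columns into those with the smaller entry strictly above (counted by $\inv(R)$) and those with the larger entry strictly above; the map $\psi$ exchanges these two classes, yielding
\[ \inv(\psi(R)) = T - \inv(R), \]
where $T$ is the total number of such pairs. Thus $\psi$ reverses each cover relation, and $B$ is the unique minimal element with $\inv(B)=T$. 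A quick inclusion--exclusion then gives
\[ T = \binom{N}{2} - 2\sum_{r=1}^{n-1}\binom{n-r}{2} = \binom{N}{2} - 2\binom{n}{3}, \]
which, after substituting $N=\binom{n}{2}$ and simplifying, collapses to $\frac{(n-2)(n-1)n(3n-5)}{24}$. The hard part will be the balanced-preservation step: without the arm-equals-leg identity, relabeling via $\psi$ would generically produce an unbalanced filling, so this property is genuinely special to the staircase shape and explains why the theorem is stated only for $w_0^{(n)}$.
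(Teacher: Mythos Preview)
Your proof is correct and follows essentially the same approach as the paper: both hinge on the arm-equals-leg property of the staircase to show $\psi$ preserves balance, and both derive order reversal from the complement identity $\inv(\psi(R)) + \inv(R) = T$ where $T$ counts cell pairs in distinct rows and columns. The only noticeable difference is in the final count: you compute $T = \binom{N}{2} - 2\binom{n}{3}$ by inclusion--exclusion on same-row and same-column pairs, whereas the paper counts directly, summing $k\binom{k}{2}$ (cells above row $k$) over rows and subtracting the same-column overcount $\sum_k \binom{k}{2}$; both simplify to the stated formula.
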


\begin{proof}
  The Rothe diagram $\D(w_0^{(n)})$ is the staircase diagram $\delta_{n-1}$ of left-justified rows of lengths $1,2,\ldots,n-1$ from top to bottom. Thus every cell $y$ of $\D(w_0^{(n)})$ has as many cells above it as to its right. For $y$ a cell of $\D(w_0^{(n)})$, let $\mathrm{leg}(y)$ denote the set of cells above $y$ in the same column and let $\mathrm{arm}(y)$ denote the set of cells to the right of $y$ in the same row. Then
  \begin{eqnarray*}
    \#\{ x \in \mathrm{leg}(y) \mid x < y \} & = & \#\mathrm{leg}(y) - \#\{ x \in \mathrm{leg}(y) \mid x > y \}, \\
    \#\{ z \in \mathrm{arm}(y) \mid z > y \} & = & \#\mathrm{arm}(y) - \#\{ z \in \mathrm{arm}(y) \mid z < y \}.
  \end{eqnarray*}
  For $R \in \Bal(w_0^{(n)})$, since $\#\mathrm{leg}(y) = \#\mathrm{arm}(y)$ for every $y$, this implies 
  \begin{eqnarray*}
    \#\{ x \in \mathrm{leg}(y) \mid x > y \} & = & \#\{ z \in \mathrm{arm}(y) \mid z < y \},
  \end{eqnarray*}
  from which it follows that $\psi(R)$ is balanced.

  For every pair of cells $x,y$ neither in the same row nor same column, say with $x$ above $y$, the pair $(x,y)$ is an inversion in $R$ if and only if it is not an inversion in $\psi(R)$. In particular, every such pair is an inversion only for $\psi(P)$, where $P$ is the super-Yamanouchi tableau. To compute the number of such pairs, notice that there are $\binom{k}{2}$ cells above the cell in the $k$th row from the top, and we should not have counted $k-1$ cells in the first column, $k-2$ in the second, and so on, giving 
  \[ \sum_{k=1}^{n-1} k \binom{k}{2} - \sum_{k=1}^{n-1} \binom{k}{2}
  = \frac{1}{4}(3n-1)\binom{n}{3} - \binom{n}{3}
  = \frac{(n-2)(n-1)(n)(3n-5)}{24}, \]
  where the leftmost summation is the (signless) Stirling numbers of the first kind $\mathrm{s}(n,n-2)$ and the rightmost is the tetrahedral numbers.
\end{proof}

\begin{example}[Minimal element of $\Bal(w_0^{(n)})$]
  The ranked poset on $\Bal(w_0^{(4)})$ is shown in Fig.~\ref{fig:B321}. Notice that the unique minimal element is given by
  \[ \psi\left(\raisebox{0.5\cellsize}{$\smtab{6 \\ 5 & 4 \\ 3 & 2 & 1}$}\right)
  \ = \
  \raisebox{0.5\cellsize}{$\smtab{1 \\ 2 & 3 \\ 4 & 5 & 6}$} \]
  and the number of inversions for this minimum is $11 - 4 = 7$.
\end{example}

\begin{figure}[ht]
  \begin{center}
    \begin{tikzpicture}[xscale=1.6,yscale=1,
        label/.style={%
          postaction={ decorate,%transform shape,
            decoration={ markings, mark=at position 0.75 with \node #1;}}}]
      \node at (3,7) (R37) {$\smtab{6 \\ 5 & 4 \\ 3 & 2 & 1}$};
      \node at (2,6) (R26) {$\smtab{4 \\ 5 & 6 \\ 3 & 2 & 1}$};
      \node at (4,6) (R46) {$\smtab{6 \\ 5 & 3 \\ 4 & 2 & 1}$};
      \node at (1,5) (R15) {$\smtab{2 \\ 5 & 6 \\ 3 & 4 & 1}$};
      \node at (5,5) (R55) {$\smtab{6 \\ 5 & 1 \\ 4 & 2 & 3}$};
      \node at (0,4) (R04) {$\smtab{1 \\ 5 & 6 \\ 3 & 4 & 2}$};
      \node at (2,4) (R24) {$\smtab{2 \\ 4 & 6 \\ 3 & 5 & 1}$};
      \node at (6,4) (R64) {$\smtab{6 \\ 3 & 1 \\ 4 & 2 & 5}$};
      \node at (1,3) (R13) {$\smtab{1 \\ 4 & 6 \\ 3 & 5 & 2}$};
      \node at (5,3) (R53) {$\smtab{5 \\ 3 & 1 \\ 4 & 2 & 6}$};
      \node at (7,3) (R73) {$\smtab{6 \\ 2 & 1 \\ 4 & 3 & 5}$};
      \node at (2,2) (R22) {$\smtab{1 \\ 2 & 6 \\ 3 & 5 & 4}$};
      \node at (6,2) (R62) {$\smtab{5 \\ 2 & 1 \\ 4 & 3 & 6}$};
      \node at (3,1) (R31) {$\smtab{1 \\ 2 & 4 \\ 3 & 5 & 6}$};
      \node at (5,1) (R51) {$\smtab{3 \\ 2 & 1 \\ 4 & 5 & 6}$};
      \node at (4,0) (R40) {$\smtab{1 \\ 2 & 3 \\ 4 & 5 & 6}$};
      \draw[thin,label={[above]{$\braid_5$}}](R37) -- (R26) ;
      \draw[thin,label={[above]{$\swap_3$}}] (R37) -- (R46) ;
      \draw[thin,label={[above]{$\braid_3$}}](R26) -- (R15) ;
      \draw[thin,label={[above]{$\braid_2$}}](R46) -- (R55) ;
      \draw[thin,label={[above]{$\swap_1$}}] (R15) -- (R04) ;
      \draw[thin,label={[above]{$\swap_4$}}] (R15) -- (R24) ;
      \draw[thin,label={[above]{$\braid_4$}}](R55) -- (R64) ;
      \draw[thin,label={[above]{$\swap_4$}}] (R04) -- (R13) ;
      \draw[thin,label={[above]{$\swap_1$}}] (R24) -- (R13) ;
      \draw[thin,label={[above]{$\swap_5$}}] (R64) -- (R53) ;
      \draw[thin,label={[above]{$\swap_2$}}] (R64) -- (R73) ;
      \draw[thin,label={[above]{$\braid_3$}}](R13) -- (R22) ;
      \draw[thin,label={[above]{$\swap_2$}}] (R53) -- (R62) ;
      \draw[thin,label={[above]{$\swap_5$}}] (R73) -- (R62) ;
      \draw[thin,label={[above]{$\braid_5$}}](R22) -- (R31) ;
      \draw[thin,label={[above]{$\braid_4$}}](R62) -- (R51) ;
      \draw[thin,label={[above]{$\swap_3$}}] (R31) -- (R40) ;
      \draw[thin,label={[above]{$\braid_2$}}](R51) -- (R40) ;
    \end{tikzpicture}
  \caption{\label{fig:B321}An illustration of the Coxeter moves on $\Bal(4321)$.}
  \end{center}
\end{figure}

The graph on reduced words for $w_0^{(n)}$ is of particular interest. Dehornoy and Autord \cite{DA10} proved that the diameter of the graph for $w_0^{(n)}$ grows asymptotically like $n^4$. Reiner and Roichman \cite{RR13} used hyperplane arrangements to prove an exact formula for the diameter that coincides with $\inv(B)$ in Theorem~\ref{thm:reverse}. We give a new, elementary proof using the inversion metric on balanced tableaux.

\begin{corollary}
  The maximum distance between two reduced words for $w_0^{(n)}$ is
  \begin{equation}
    \max_{\rho,\sigma\in\Red(w_0^{(n)})} \mathrm{dist}(R,S) = \frac{(n-2)(n-1)(n)(3n-5)}{24} .
  \end{equation}
\end{corollary}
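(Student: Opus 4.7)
The plan is to interpret the distance claim through the bijection with standard balanced tableaux from Corollary~\ref{cor:rex-bal} and leverage the involution $\psi$ from Theorem~\ref{thm:reverse}. Writing $P$ for the super-Yamanouchi tableau and $B = \psi(P)$ for the unique minimum, I first observe that $\mathrm{dist}(P, B) = \inv(B)$: the lower bound follows because each Coxeter move alters $\inv$ by exactly $\pm 1$, as shown in the proof of Theorem~\ref{thm:bal-super}, while the upper bound is Theorem~\ref{thm:bal-super} itself, which provides a sequence of $\inv(B)$ Coxeter moves carrying $B$ to $P$. This already yields the lower bound $\max_{R, S} \mathrm{dist}(R, S) \geq \inv(B)$.

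For the matching upper bound, I would use $\psi$ as a graph automorphism of $\Bal(w_0^{(n)})$. Since $\psi$ reverses order on the ranked poset, it should intertwine with Coxeter moves via $\psi \circ \swap_i = \swap_{\ell - i} \circ \psi$ and $\psi \circ \braid_i = \braid_{\ell - i + 1} \circ \psi$, where $\ell = \binom{n}{2}$; this is because replacing each entry $i$ by $\ell - i + 1$ turns an exchange of $\{i, i+1\}$ into an exchange of $\{\ell - i, \ell - i + 1\}$, and similarly for braid triples. In particular, $\psi$ preserves distances. Combined with $\psi(P) = B$ and the observation that $\psi$ negates row-inversion pairs, so that $\inv(R) + \inv(\psi(R)) = \inv(B)$ for every $R$ (checked by summing over the total count of pairs in different rows and columns, then specializing to $R = P$), this yields $\mathrm{dist}(R, B) = \mathrm{dist}(\psi(R), P) = \inv(\psi(R)) = \inv(B) - \inv(R)$.

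For any pair $R, S \in \Bal(w_0^{(n)})$, the triangle inequality then yields two bounds: routing through $P$ gives $\mathrm{dist}(R, S) \leq \inv(R) + \inv(S)$, while routing through $B$ gives $\mathrm{dist}(R, S) \leq 2\inv(B) - \inv(R) - \inv(S)$. Taking the minimum and using $\min(a, b) \leq (a + b)/2$ for nonnegative $a, b$, both bounds combine to $\mathrm{dist}(R, S) \leq \inv(B)$, completing the upper bound. The main obstacle is verifying that $\psi$ truly is a graph automorphism: while Theorem~\ref{thm:reverse} establishes $\psi$ as an order-reversing involution, the intertwining identities above need an explicit check from Definitions~\ref{def:bal-swap} and~\ref{def:bal-braid}, entirely parallel to the argument given for the flip map $\varphi$ in Proposition~\ref{prop:flip}.
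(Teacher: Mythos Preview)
Your proposal is correct and follows essentially the same route as the paper: both arguments establish $\mathrm{dist}(P,R)+\mathrm{dist}(R,B)=\inv(B)$ for every $R$ and then average the two triangle inequalities through $P$ and through $B$. The only cosmetic difference is that you spell out explicitly why $\psi$ is a graph automorphism and why $\inv(R)+\inv(\psi(R))=\inv(B)$, whereas the paper bundles both facts into the phrase ``the reversed poset assured by Theorem~\ref{thm:reverse}.''
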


\begin{proof}
  Let $P$ denote the super-Yamanouchi balanced tableau for $w_0^{(n)}$, and let $B = \psi(P)$. Given any balanced tableau $R \in \Bal(w_0^{(n)})$, there is an $\inv$-increasing path from $P$ to $R$ and, by considering the reversed poset assured by Theorem~\ref{thm:reverse}, an $\inv$-decreasing path from $R$ to $B$. Therefore we have
  \begin{equation}
    \mathrm{dist}(P,R) + \mathrm{dist}(R,B) = \mathrm{dist}(P,B).
    \label{e:dist}
  \end{equation}
  For $R,S \in \Bal(w_0^{(n)})$, the triangle inequality gives
  \[ \mathrm{dist}(R,P) + \mathrm{dist}(P,S) \geq \mathrm{dist}(R,S) \leq \mathrm{dist}(R,B) + \mathrm{dist}(B,S) . \]
  Combining this with Eq.~\ref{e:dist} for both $R$ and $S$, we have
  \[ 2 \, \mathrm{dist}(R,S) \leq \mathrm{dist}(R,P) + \mathrm{dist}(P,S) + \mathrm{dist}(R,B) + \mathrm{dist}(B,S)  = 2 \, \mathrm{dist}(P,B). \]
  Thus $\mathrm{dist}(R,S) \leq \mathrm{dist}(P,B) = \inv(B)$ for all $R,S \in \Bal(w_0^{(n)})$. In particular, the diameter of the graph is $\inv(B)$, so the result follows from Theorem~\ref{thm:reverse}.
\end{proof}

%%%%%%%%%%%%%%%%%%%%%%%%%%%%%%%%%%%%%%%%%%%%%%%%%%%%%%%%%%%%%%%%
%
\section*{Acknowledgments}
%
%%%%%%%%%%%%%%%%%%%%%%%%%%%%%%%%%%%%%%%%%%%%%%%%%%%%%%%%%%%%%%%%
\label{sec:thanks}

The author is grateful to Bridget Tenner and Vic Reiner for interesting discussions and helpful comments on early drafts. 

%%%%%%%%%%%%%%%%%%%%%%%%%%%%%%%%%%%%%%%%%%%%%%%%%%%%%%%%%%%%
%
%  Bibliography
%
%%%%%%%%%%%%%%%%%%%%%%%%%%%%%%%%%%%%%%%%%%%%%%%%%%%%%%%%%%%%

\bibliographystyle{amsplain} 
\bibliography{redinv}

\end{document}